\documentclass{amsart}

\usepackage{amssymb}

\usepackage{enumerate}

\DeclareMathOperator{\BV}{BV}
\DeclareMathOperator{\Ren}{Re}
\DeclareMathOperator{\Imn}{Im}

\newcommand{\abs}[1]{\mathord{\left|#1\right|}}
\newcommand{\zsum}{\sideset{}{^*}\sum}
\newcommand{\eps}{\varepsilon}
\newcommand{\half}{\frac12}
\newcommand{\R}{\mathbb{R}}
\newcommand{\C}{\mathbb{C}}
\newcommand{\N}{\mathbb{N}}
\newcommand{\Z}{\mathbb{Z}}
\newcommand{\Q}{\mathbb{Q}}
\newcommand{\mfa}{\mathfrak a}

\newcommand{\mff}{\mathfrak f}
\newcommand{\mfp}{\mathfrak p}
\newcommand{\mcL}{\mathcal L}
\newcommand{\mcE}{\mathcal E}
\newcommand{\mcO}{\mathcal O}

\newtheorem{theorem}{Theorem}[section]
\newtheorem{lemma}[theorem]{Lemma}

\theoremstyle{definition}
\newtheorem{definition}[theorem]{Definition}

\theoremstyle{remark}
\newtheorem{remark}[theorem]{Remark}

\numberwithin{equation}{section}

\begin{document}

\title[Proving the completeness of a list of zeros]{A method for proving the completeness of a list of zeros of certain L-functions}

\author{Jan B\"uthe}
\address{Mathematisches Institut, Bonn University, Endenicher Allee 60, 53115 Bonn, Germany}
\email{jbuethe@math.uni-bonn.de}

\subjclass[2010]{Primary 11M26, Secondary 11Y35}

\date{\today}

\begin{abstract}
When it comes to partial numerical verification of the Riemann Hypothesis, one crucial part is to verify the completeness of a list of pre-computed zeros. Turing developed such a method, based on an explicit version of a theorem of Littlewood on the average of the argument of the Riemann zeta function. In a previous paper \cite{BFJK13} we suggested an alternative method based on the Weil-Barner explicit formula. This method asymptotically sacrifices fewer zeros in order to prove the completeness of a list of zeros with imaginary part in a given interval. In this paper, we prove a general version of this method for an extension of the Selberg class including Hecke and Artin L-series, L-functions of modular forms, and, at least in the unramified case, automorphic L-functions. As an example, we further specify this method for Hecke L-series and L-functions of elliptic curves over the rational numbers.

\end{abstract}

\maketitle

\section{Introduction}

In this paper we develop a general method to prove that a list of zeros of an L-function contains all zeros with imaginary part in a given interval $[a,b]$. The method is proved for all L-functions in an extension of the Selberg class.

The method is an alternative to the Turing method \cite{Turing1953,Lehman70}, of which a similar generic version has been developed in \cite{Booker06}. The methods are similar in that only the zeros in a neighbourhood of $a$ and $b$ have to be known within a higher accuracy, but they differ with respect to the required number of additional zeros with imaginary part outside of $[a,b]$. While the Turing method requires $O(\log(a)^2)$ additional zeros below $a$ and $O(\log(b)^2)$ additional zeros above $b$, the method described in this paper requires only $O(\log(a)\log\log(a))$, respectively, $O(\log(b)\log\log(b))$ such zeros. 

In the special case of the Riemann zeta function the implied constant for the Turing method is small due to sophisticated explicit estimates of the Riemann zeta function in the critical strip \cite{Trudgian2011B}. This led to the assumption in \cite{BFJK13} that the implied cross-over for the two methods would not occur before $10^{30}$. However, numerical tests suggest that the new method generally requires fewer additional zeros. Furthermore, we will provide improved estimates in this paper which show that at the moderate height of $10^6$ the Turing method already needs about twice as many additional zeros.

A second advantage of this method is, that the proof of completeness only depends on the correctness of the Weil-Barner explicit formula and very few explicit estimates of the test function to which the Weil-Barner formula is applied. Only the specification of the implied constant in the aforementioned $O$-terms requires longer calculations.

We will first formulate the explicit formula for the class of L-functions in consideration, and then prove the general version of the method. Then, we further specify the method for the Riemann zeta function, improving the results in \cite{BFJK13}, Hecke L-series and elliptic curves over $\Q$. As far as the author knows, the Turing method has not been adapted to the latter families of L-functions before.

\nocite{Rumley93,Tollis97}

\section{The Weil-Barner explicit formula}
The method is based on Barner's version \cite{Barner1981} of  Weil's explicit formula \cite{Weil1952}. Barner proves this explicit formula for Hecke L-series only, so we take the very general work of Jorgenson and Lang \cite{JL1994} as a reference. The explicit formula in \cite{JL1994}  covers exotic L-functions as the Selberg zeta function, which makes its proof long and complicated. Therefore, it should be pointed out, that the explicit formula for the L-functions considered in this paper can be proved more quickly by straightforward modifications of Barner's proof.

\nocite{Weil1972,Moreno1977}

We will consider all L-functions $L:\C\rightarrow \C\cup \{\infty\}$ satisfying the following properties:

\begin{enumerate}[(L1)]
\item There are numbers $a(n)\in\C$ and a constant $K\in\R$ such that we have
\[
 L(s) = \sum_{n=1}^\infty a(n) n^{-s}
\]
for $\Ren(s)>K$ where the sum converges absolutely.\label{L:Dirichlet-series}

\item There is a polynomial $P\in \C[x]$ such that the function $P(s)L(s)$ continues to an entire function of finite (power) order.\label{L:meromorphic-continuation}

\item There exist $c(p^m)$ for all prime powers $p^m$, a $\sigma_1\in\R$, and a $C>0$ such that we have
\[
\abs{c(p^m)} \leq C p^{(\sigma_1-1)m},
\]
and
\[
 L(s) = \exp\Bigl(\sum_{p^m} c(p^m) p^{-ms}\Bigr)
\]
for $\Ren(s) > \sigma_1$.\label{L:Euler-product}

\item There exists a $\sigma_0<2\sigma_1$, a $k_1\in\N$, $Q, \lambda_1,\dots, \lambda_k \in(0,\infty)$, $\mu_1,\dots, \mu_k\in \C$ with $\Ren(\mu_k)>-\lambda_k\sigma_0/2$, and a $w\in\C$ with $\abs w = 1$, such that the complete L-function 
\[
\Lambda(s) = Q^s \prod_{k=1}^{k_1} \Gamma(\lambda_ks + \mu_k)L(s)
\]
satisfies the functional equation
\[
 \Lambda(s) = w \overline{\Lambda(\sigma_0-\overline{s})}.
\]
\label{L:functional-equation}
\end{enumerate}

Compared to the Selberg class \cite{Selberg1989}, we essentially gave up the Ramanujan-Petersson conjecture, which for most automorphic L-functions is
still an open problem, and we allow a finite number of poles (not only at $s=1$). We also do not assume the L-function to be normalized, i.e. we allow $\sigma_0\neq 1$.

We use the convention $\mu_k = u_k + iv_k$, with $u_k$ and $v_k$ real, and define
\[
 G(s) = Q^s \prod_{k=1}^{k_1} \Gamma(\lambda_ks + \mu_k)
\]
and
\[
G_0(s) = Q^s \prod_{k=1}^{k_1} \Gamma(\lambda_ks + u_k).
\]

Furthermore, we denote by $\mathcal N(\Lambda)$ the set of zeros of $\Lambda$, which we also refer to as the non-trivial zeros of $L$, and by $\mathcal P(\Lambda)$ the set of poles.

Given the parameters $\sigma_0$ and $\sigma_1$, we define the Barner class of test functions (which actually only depends on $\sigma_1-\frac{\sigma_0}2$) to be the class of functions $f:\R\rightarrow \C$ satisfying the following properties:

\begin{enumerate}[({Ba}1)]
 \item There exists a $c> \sigma_1- \frac{\sigma_0}{2}$, such that
\[
 f(t) \exp(c\abs{t}) \in \BV(\R) \cap L^1(\R).
\]
\item The function $f$ is normalized, i.e., for every $t\in\R$ we have
\[
 f(t) = \frac12\lim_{h\searrow 0} \Bigl(f(t+h) + f(t-h)\Bigr).
\]
\item There exists an $\eps > 0$ such that we have
\[
 2 f(0) = f(t) + f(-t) + O(\abs{t}^\eps)
\]
for $t\to 0$.
\end{enumerate}

Now let $f$ be a  member of the Barner class. We define the Fourier transform by
\[
 \hat f(\xi) = \int_{-\infty}^\infty e^{i\xi t} f(t)\, dt.
\]
Then we have a pointwise Fourier inversion formula given by
\[
 f(t) = \frac{1}{2\pi}\int_{-\infty}^\infty e^{-i\xi t} \hat f(\xi)\, d\xi.
\]
With this we define the functionals
\begin{equation*}
 w_s(\hat f) = \zsum_{\rho} n_\rho  \hat f\bigl(\tfrac{\rho}{i}-\tfrac{\sigma_0}{2i}\bigr) 
= \lim_{T\to \infty} \sum_{\abs{\Imn(\rho)}<T} n_\rho  \hat f\bigl(\tfrac{\rho}{i}-\tfrac{\sigma_0}{2i}\bigr),
\end{equation*}
where the sum is taken over all zeros and poles of the function $\Lambda(s)$
according to their multiplicities $n_\rho$ (poles being counted with negative multiplicity),
\begin{equation*}
 w_f(f) = - 
\sum_{p^m} \frac{\log p}{\sqrt{p^{m\sigma_0}}}\Bigl(c(p^m) f(m\log p) + \overline{c(p^m)} f(-m\log p)\Bigr),
\end{equation*}
and
\begin{multline*}
 w_\infty(f) = 2f(0)\Ren\frac{G_0'}{G_0}\bigl(\frac{\sigma_0}{2}\bigr)  \\
 - \sum_{k=1}^n \lambda_k \int_{0}^\infty \frac{e^{-(\lambda_k \sigma_0/2 + u_k)t}}{1-e^{-t}}
\bigl(e^{-iv_kt}f(\lambda_kt) + e^{iv_kt}f(-\lambda_kt) - 2f(0)\bigr)\, dt.
\end{multline*}
Then, the Weil-Barner formula takes the form
\begin{equation*}
 w_s(\hat f) = w_f(f) + w_\infty(f).
\end{equation*}

\section{The Method}
Since the L-functions in consideration may have multiple zeros, we begin with the following definition.

\begin{definition}
By a \emph{list of zeros} of a function $f$ we shall mean a sequence $(\rho_j)_{j\in I}$ of zeros of $f$, such that every zero $\rho$ with multiplicity $n_\rho$ occurs at most $n_\rho$ times in the sequence. If $Z$ is a set of zeros of $f,$ we say that a list \emph{contains all zeros in $Z$} if every $\rho\in Z$ occurs exactly $n_\rho$ times.
\end{definition}

We aim to verify that a list of zeros of an L-function contains all zeros with imaginary part in $[a,b]$ by applying the Weil-Barner explicit formula to the test function
\begin{equation}\label{e:fabh-def}
 f_{a,b,h}(t) = \frac{1}{2\pi} \frac{e^{-ait} - e^{-bit}}{it} \frac{1}{\cosh(\tfrac h2 t)}
\end{equation}
which belongs to the Barner class for $h> \sigma_1 - \frac{\sigma_0}{2}$.

For $\abs{\Imn(z)} < \frac h2$ its Fourier transform is given by
\begin{equation}\label{e:Ffabh-def}
 \hat f_{a,b,h}(z) =\frac{2}{\pi}\Bigl[\arctan\bigl(\exp\bigl[\tfrac{\pi}{h}(z-a)\bigr]\bigr) - \arctan\bigl(\exp\bigl[\tfrac{\pi}{h}(z-b)\bigr]\bigr)\Bigr],
\end{equation}
where $\arctan(z)$ is holomorphically extended to  $\{z\in\C\mid  iz\notin [1,\infty)\cup (-\infty,-1]\}$. The function $\hat f_{a,b,h}$ is the convolution of $\chi_{[a,b]}$ and  $\frac{1}{h\cosh(\frac{\pi}{h} z)}$ from which it inherits the property of having positive real part in this strip. So if the zeros in the list are used to approximate $w_s(\hat f_{a,b,h})$, the Weil-Barner explicit formula thus gives an upper bound for contribution of the zeros not contained in the list.

The additional parameter $h$ controls a trade-off between the zeros being sacrificed and the number of summands which are needed to  approximate $w_f(f_{a,b,h})$ within sufficient accuracy.

The general method is based on the following theorem.

\begin{theorem}\label{t:fundamental-inequality}
 Let $(\sigma_0/2 + i\gamma_j)_{j=1}^m$ be a list of non-trivial zeros of $L$.
Let $h> 2\sigma_0 - \sigma_1$ and assume that $b-a > 5 \frac{h}{\pi}$ holds. Then, if the inequality
\begin{equation}\label{e:fundamental-inequality} 
 w_f(f_{a,b,h}) + w_\infty(f_{a,b,h})
- \sum_{j=1}^m  \hat f_{a,b,h}(\gamma_j) 
+ \sum_{\rho\in\mathcal P(\Lambda)} \abs{n_\rho}  \hat f_{a,b,h}\bigl(\tfrac\rho i -\tfrac{\sigma_0}{2i}\bigr)
 \leq  0.49
\end{equation}
holds, the list contains all non-trivial zeros of $L(s)$ with imaginary part in $[a,b]$.

Conversely, under the assumption of the Riemann Hypothesis for $L(s)$, for every $\eps>0$ there exists a $B_{\eps}>0$ such that
\[
C_{\eps}(X) = \frac h\pi \log\log\bigl(e(Q+1)(\abs X +1)\bigr) + B_{\eps}
\]
satisfies the property that if the list contains all zeros with imaginary part in \linebreak $[a-C_{\eps}(a),b+C_{\eps}(b)]$, then the left hand side of \eqref{e:fundamental-inequality} is smaller than $\eps$.
\end{theorem}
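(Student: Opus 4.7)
The plan is to apply the Weil--Barner explicit formula to $f = f_{a,b,h}$, which lies in the Barner class under the given hypothesis on $h$. Expanding $w_s(\hat f_{a,b,h}) = w_f(f_{a,b,h}) + w_\infty(f_{a,b,h})$ and separating zeros (with positive multiplicity $n_\rho$) from poles (with negative $n_\rho$) one finds that the left-hand side of \eqref{e:fundamental-inequality} is precisely
\[
S \;:=\; \sum_{\rho \in \mathcal N(\Lambda)} n'_\rho\, \hat f_{a,b,h}\bigl(\tfrac{\rho}{i} - \tfrac{\sigma_0}{2i}\bigr),
\]
where $n'_\rho$ is the multiplicity of $\rho$ as a zero of $\Lambda$ minus the number of times it appears in the given list; so $n'_\rho \geq 0$, with strict inequality only at the ``missing'' zeros. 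For every zero $\rho$ of $\Lambda$ the argument satisfies $\btr{\Imn(\rho/i - \sigma_0/(2i))} = \btr{\Ren\rho - \sigma_0/2} < h/2$, so each term lies in the strip on which $\hat f_{a,b,h}$ has positive real part; pairing each off-critical-line zero with its functional-equation partner $\sigma_0 - \bar\rho$ gives conjugate arguments, so $S$ is in fact real and non-negative.

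For the sufficient condition, it suffices to show that every single term corresponding to a zero $\rho$ with $\Imn\rho \in [a,b]$ exceeds $0.49$. On $[a,b]$ the real function $\hat f_{a,b,h}(\gamma)$ is symmetric about $(a+b)/2$ and unimodal, hence attains its minimum at the endpoints, where \eqref{e:Ffabh-def} together with $\arctan(1) = \pi/4$ yields
\[
\hat f_{a,b,h}(a) \;=\; \tfrac12 - \tfrac{2}{\pi}\arctan\bigl(e^{-\pi(b-a)/h}\bigr) \;>\; \tfrac12 - \tfrac{2}{\pi}e^{-5} \;>\; 0.49
\]
by $b-a > 5h/\pi$. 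A parallel computation based on the identity $\Ren\arctan(e^{i\theta}) = \pi/4$ for $\btr\theta < \pi/2$ gives the same lower bound for $\Ren\hat f_{a,b,h}(\gamma + iy)$ with $\gamma \in [a,b]$ and $\btr y < h/2$, so even a single off-critical-line zero with $\Imn\rho \in [a,b]$ would force $S > 0.49$. Hence \eqref{e:fundamental-inequality} forces $n'_\rho = 0$ for every zero $\rho$ with imaginary part in $[a,b]$.

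For the converse, RH reduces $S$ to a sum of non-negative reals $\hat f_{a,b,h}(\gamma)$ over the ordinates of zeros outside the list. If the list covers $[a - C_\eps(a),\, b + C_\eps(b)]$, only zeros with $\gamma < a - C_\eps(a)$ or $\gamma > b + C_\eps(b)$ contribute. The identity $\arctan x = \pi/2 - \arctan(1/x)$ applied to \eqref{e:Ffabh-def} yields the elementary decay estimate $\hat f_{a,b,h}(\gamma) \leq (2/\pi)\exp(-\pi\,\mathrm{dist}(\gamma,[a,b])/h)$, and Abel summation combined with a Riemann--von Mangoldt style bound $N(T+1) - N(T) \ll \log(Q(\btr T + 1))$---which, for the axiomatic class (L\ref{L:Dirichlet-series})--(L\ref{L:functional-equation}), follows from applying the argument principle to $\Lambda$ on a horizontal strip---reduces each tail to order $\log(Q(\btr X + 1))\,e^{-\pi C_\eps(X)/h}$. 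Choosing $C_\eps(X) = (h/\pi)\log\log(e(Q+1)(\btr X+1)) + B_\eps$ with $B_\eps$ absorbing the implicit constant and $\log(1/\eps)$ produces the required tail bound of size $\eps$. The main technical step is this uniform Riemann--von Mangoldt bound, which is what makes the $\log\log$-growth of $C_\eps$ explicit; the remainder of the proof is elementary arctangent calculus.
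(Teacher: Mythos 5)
Your proof is correct. In the sufficient direction you follow essentially the same path as the paper: the Weil--Barner formula identifies the left-hand side of \eqref{e:fundamental-inequality} with a non-negatively weighted sum $\sum_{\rho} n'_\rho \,\hat f_{a,b,h}(\rho/i - \sigma_0/(2i))$ over missing zeros, each term has positive real part by \eqref{e:Ffabh-lower-global}, and a missing zero with $\Imn\rho\in[a,b]$ would contribute more than $0.49$ by the endpoint computation $\Ren\hat f_{a,b,h}(a+iy)\geq \tfrac12 - \tfrac2\pi e^{-\pi(b-a)/h}$ together with unimodality in the real variable (this is exactly what the paper packages as inequality \eqref{e:Ffabh-lower-inner} of Lemma~\ref{l:test-function}, which you re-derive rather than cite).

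In the converse direction you take a genuinely different route. The paper re-applies the Weil--Barner formula to the auxiliary pair $g_{r,X}(t)=\tfrac12 e^{-r\abs t+iXt}$, $\hat g_{r,X}(\xi)=1/((\xi-X)^2+r^2)$, obtaining the weighted zero count in \eqref{e:aux-sum} and reading off the local density $O(\log[(Q+2)(\abs X+2)])$ directly from Stirling applied to $\Ren\tfrac{\Lambda'}\Lambda$. You instead invoke a Riemann--von Mangoldt bound $N(T+1)-N(T)\ll \log\bigl(Q(\abs T+1)\bigr)$ which you assert follows from the argument principle applied to $\Lambda$ on a horizontal strip, and then combine it with the decay bound \eqref{e:Ffabh-upper-outer} and Abel summation. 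Both produce a tail of size $\ll\log(Q(\abs X+1))\,e^{-\pi C_\eps(X)/h}\ll e^{-\pi B_\eps/h}$, so the conclusion is the same. The trade-off: the paper's method is more self-contained, since it reuses the explicit formula that is already in hand and avoids having to establish Phragm\'en--Lindel\"of-type upper bounds on $\abs\Lambda$ in the critical strip (which the argument-principle route requires and which your sketch glosses over --- for the axiomatic class (L\ref{L:Dirichlet-series})--(L\ref{L:functional-equation}) this takes real, if standard, work); your route is more classical and perhaps more transparent to a reader familiar with the Riemann--von Mangoldt machinery. One minor caveat worth being precise about: the density bound's implied constant also depends on the archimedean data $\lambda_k,\mu_k$, not just on $Q$; this is harmless here because $B_\eps$ is allowed to depend on the fixed L-function, but the statement as you wrote it suggests a cleaner $Q$-dependence than is actually available.
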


The second part of the theorem also holds without the assumption of the Riemann Hypothesis, but the proof is much shorter this way.

First, we prove some bounds for the functions $f_{a,b,h}$ and $\hat f_{a,b,h}$. 

\begin{lemma}\label{l:test-function}
 Let $a<b$, $h>0$ and let  $\abs{\Imn(z)} < \frac h2$. Then the following inequalities hold:
\begin{align}
& \Ren \hat f_{a,b,h}(z) > 0\label{e:Ffabh-lower-global} \\
 &0 < 1 - \Ren \hat f_{a,b,h}(z) <  \frac 4\pi e^{-\tfrac{\pi}{h}\min\{\Ren(z) - a, b -\Ren(z)\}}	&\text{for $\Ren(z)\in [a,b]$}	\label{e:Ffabh-upper-lower-inner},\\
 &\Ren \hat f_{a,b,h}(z) < \frac 2\pi e^{-\tfrac{\pi}{h}\max\{\Ren(z)-b,a-\Ren(z)\}}	&\text{for $\Ren(z)\notin (a,b)$.}\label{e:Ffabh-upper-outer}
\end{align}
If we additionally assume $b-a>5\frac{h}{\pi}$, we have
\begin{equation}\label{e:Ffabh-lower-inner}
\Ren \hat f_{a,b,h}(z) > 0.49
\end{equation}
for $\Ren(z)\in[a,b]$.

Furthermore, we have
\begin{equation}\label{e:fabh-abs-upper}
\abs{f_{a,b,h}(t)} < \frac2\pi \frac{e^{-\frac{h}{2}\abs{t}}}{\abs{t}} 
\end{equation}
for $\abs{t}\geq 1$ and arbitrary $a,b\in\R$.
\end{lemma}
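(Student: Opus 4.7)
The plan is to exploit the convolution representation
\[
\hat f_{a,b,h}(z) = \int_a^b \frac{du}{h\cosh(\pi(z-u)/h)},
\]
which the text already alludes to and which follows by differentiating \eqref{e:Ffabh-def}. Writing $w=\alpha+i\beta$ with $|\beta|<h/2$, a direct calculation gives
\[
\Ren\frac{1}{\cosh(\pi w/h)} = \frac{\cosh(\pi\alpha/h)\cos(\pi\beta/h)}{\cos^2(\pi\beta/h)+\sinh^2(\pi\alpha/h)}>0,
\]
so integrating over $[a,b]$ immediately proves \eqref{e:Ffabh-lower-global}.

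Next I substitute $\sigma=\sinh(\pi\alpha/h)/\cos(\pi y/h)$ to obtain the explicit antiderivative $(1/\pi)\arctan(\sinh(\pi\alpha/h)/\cos(\pi y/h))$ of $\Ren K(\alpha+iy)$, where $K(w)=1/(h\cosh(\pi w/h))$. Two consequences follow: the full integral over $\R$ equals $1$, and the tails can be evaluated in closed form. For $\Ren z = x\in[a,b]$, applying $\arctan(X)+\arctan(1/X)=\pi/2$ yields
\[
1-\Ren\hat f_{a,b,h}(z) = \frac{1}{\pi}\arctan\!\Bigl(\tfrac{\cos(\pi y/h)}{\sinh(\pi(x-a)/h)}\Bigr) + \frac{1}{\pi}\arctan\!\Bigl(\tfrac{\cos(\pi y/h)}{\sinh(\pi(b-x)/h)}\Bigr),
\]
while for $x>b$ the same antiderivative produces
\[
\Ren\hat f_{a,b,h}(z) = \frac{1}{\pi}\arctan\!\Bigl(\tfrac{\cos(\pi y/h)}{\sinh(\pi(x-b)/h)}\Bigr) - \frac{1}{\pi}\arctan\!\Bigl(\tfrac{\cos(\pi y/h)}{\sinh(\pi(x-a)/h)}\Bigr),
\]
with a symmetric formula for $x<a$. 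Positivity of the individual summands is the left-hand inequality in \eqref{e:Ffabh-upper-lower-inner}.

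All remaining exponential bounds reduce to the single elementary inequality
\[
\arctan(1/\sinh t) < 2e^{-t} \qquad (t>0),
\]
which I verify by checking that $\varphi(t) = 2e^{-t} - \arctan(1/\sinh t)$ satisfies $\varphi(0)=2-\pi/2>0$, $\varphi'(t)=1/\cosh t - 2e^{-t}<0$, and $\varphi(t)\to 0$ as $t\to\infty$. Since $\cos(\pi y/h)\leq 1$, this bounds each arctan appearing above by $2e^{-\pi\cdot/h}$, and summing (or, in \eqref{e:Ffabh-upper-outer}, discarding the negative term) yields the upper bounds of \eqref{e:Ffabh-upper-lower-inner} and \eqref{e:Ffabh-upper-outer}.

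Finally, for \eqref{e:Ffabh-lower-inner}, the map $s\mapsto\arctan(\cos(\pi y/h)/\sinh s)$ is convex on $(0,\infty)$ by a short second-derivative computation, so the first display above, viewed as a function of $x\in[a,b]$, is maximized at $x=a$ or $x=b$. There one arctan equals $\pi/2$ (contributing $1/2$) while the other is bounded by $(2/\pi)e^{-\pi(b-a)/h}<(2/\pi)e^{-5}<0.005$ under the hypothesis $b-a>5h/\pi$, so $\Ren\hat f_{a,b,h}(z) > 0.495$. The bound \eqref{e:fabh-abs-upper} then follows directly from $|e^{-ait}-e^{-bit}|\leq 2$ and $\cosh(ht/2)\geq e^{h|t|/2}/2$. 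The only real technical point is the auxiliary arctan inequality; every other estimate is a rearrangement of the identities above.
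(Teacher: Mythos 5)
Your proof is correct, and it reaches the same constants as the paper, but the technical route is genuinely different. The paper works with the complex function $\arctan e^{w}$ directly: it shows $\Ren\arctan e^w$ is maximized (for fixed $\Ren w<0$) on the real axis by observing that $\partial_y \Ren\arctan e^{x+iy} = -\Imn\frac{1}{\cosh(x+iy)}$ has the opposite sign to $y$, and then bounds $\arctan e^x \leq e^x$ for $x\leq 0$ by the alternating series for $\arctan$. You instead integrate the explicit real formula $\Ren\frac{1}{\cosh(\pi w/h)} = \frac{\cosh(\pi\alpha/h)\cos(\pi\beta/h)}{\cos^2(\pi\beta/h)+\sinh^2(\pi\alpha/h)}$ in closed form, which produces genuine real arctans $\arctan\bigl(\cos(\pi y/h)/\sinh(\pi s/h)\bigr)$, and then feeds these into the single elementary inequality $\arctan(1/\sinh t)<2e^{-t}$. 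For \eqref{e:Ffabh-lower-inner} the paper appeals to $x\mapsto \Ren\hat f_{a,b,h}(x+iy)$ having a global max at the midpoint and being monotone on either side, whereas you prove convexity of the complementary quantity by a second-derivative computation — a slightly different but equally valid way to push the minimum to the endpoints $x=a,b$. The upshot is that your version trades the paper's monotonicity-of-real-part lemma for an explicit closed-form antiderivative; it is more computational but entirely self-contained. The only point you should make fully explicit (and it is easily checked) is that the monotone decrease of $\varphi(t)=2e^{-t}-\arctan(1/\sinh t)$ to the limit $0$ forces $\varphi>0$ on $(0,\infty)$; this, together with $\cos(\pi y/h)\leq 1$, is what produces the factors $4/\pi$ and $2/\pi$ matching \eqref{e:Ffabh-upper-lower-inner} and \eqref{e:Ffabh-upper-outer}.
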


\begin{proof}
Since we have
\[
 2\arctan e^z = \int_{-\infty}^{\Ren(z)} \frac{dt}{\cosh(t+i\Imn(z))},
\]
the bound in \eqref{e:Ffabh-lower-global} follows from $\frac{1}{\cosh(z)}$ having positive real part in $\abs{\Imn(z)}<\frac \pi 2$. Furthermore, since for $\Ren(z)< 0$ and $\abs{\Imn(z)}< \frac \pi2$ the functions $\Imn \frac{1}{\cosh(z)}$ and $\Imn z$ have the same sign, we get
\begin{equation}\label{e:arctan-exp-bound}
 0 \leq \Ren \arctan e^z \leq \arctan e^{\Ren(z)} = \sum_{n=0}^\infty \frac{(-1)^n}{2n+1} e^{(2n+1)\Ren(z)} \leq e^{\Ren(z)}
\end{equation}
for such $z$.

Now let $\Ren(z)\in[a,b]$. Since the chosen branch for $\arctan(z)$ satisfies
\[
\arctan(z) + \arctan(1/z) = \frac \pi 2
\]
in $\Ren(z)>0$, we get
\[
 \Ren \hat f_{a,b,h}(z) = 1 - \frac{2}{\pi}\left(\Ren \arctan e^{\frac \pi h (a-z)} + \Ren \arctan e^{\frac \pi h (z-b)}\right).
\]
This together with \eqref{e:arctan-exp-bound} gives the bound in \eqref{e:Ffabh-upper-lower-inner}. For \eqref{e:Ffabh-lower-inner} we also use the fact that the function $x\mapsto \Ren \hat f_{a,b,h}(x+iy)$ has a global maximum at $x=\frac{a+b}2$ and is otherwise monotonic. Thus we have
\[
 \Ren \hat f_{a,b,h}(z) \geq  \Ren \hat f_{a,b,h}(a+i \Imn y) \geq \frac12 - \frac{2}{\pi} e^{-\frac\pi h (b-a)} > 0.49,
\]
since $b-a \geq 5\frac h\pi$.

The bound in \eqref{e:Ffabh-upper-outer} follows similarly, e.g., for $\Ren(z) \geq b$ we have
\begin{equation*}
 \Ren \hat f_{a,b,h}(z) = \frac2\pi\left(\Ren \arctan e^{\frac\pi h(b-z)} - \Ren \arctan e^{\frac\pi h(a-z)}\right) 
\leq \frac2\pi e^{\frac\pi h(b-\Ren z)}.
\end{equation*}

The remaining inequality in \eqref{e:fabh-abs-upper} follows directly from \eqref{e:fabh-def}.
\end{proof}

\begin{proof}[Proof of Theorem \ref{t:fundamental-inequality}]
Let $(\rho_j)_{j=1}^\infty$ be the list of zeros that are missing in $\mcL$. Then, if the inequality in \eqref{e:fundamental-inequality} is satisfied, we have 
\begin{equation*}
\Ren \hat f_{a,b,h} \bigl(\tfrac{\rho_l}i -\tfrac{\sigma_0}{2i}\bigr) \leq \sum_{j=1}^\infty \Ren \hat f_{a,b,h}\bigl(\tfrac{\rho_j}i -\tfrac{\sigma_0}{2i}\bigr) \leq 0.49
\end{equation*}
for every $l\in\N$, where the first inequality follows from \eqref{e:Ffabh-lower-global} and the second follows from the explicit formula. Consequently, there is no $\rho$ with $\Imn(\rho)\in[a,b]$ among the $\rho_j$,  since by \eqref{e:Ffabh-lower-inner} every such zero would contribute an amount $>0.49$.

It remains to prove the second part. Assuming the Riemann Hypothesis, we have $\rho = \frac{\sigma_0}{2}+i\gamma$ with $\gamma\in\R$ for all $\rho\in\mathcal N(\Lambda)$. We apply the Weil-Barner explicit formula to the Fourier transform pair
\[
 g_{r,X}(t) = \frac12 e^{-r\abs{t}+iXt},\qquad \hat g_{r,X}(\xi) = \frac{1}{(\xi-X)^2+r^2}
\]
(taking $r>\sigma_1-\sigma_0/2$), and obtain the identity
\begin{equation}\label{e:aux-sum}
 \sum_{\rho \in \mathcal N (\Lambda)}\frac{n_\rho}{(\gamma -X)^2+r^2} 
= \Ren\frac{\Lambda'}{\Lambda}\Bigl(\frac{\sigma_0}{2}+r+iX\Bigr) 
+ \sum_{\rho\in \mathcal P(\Lambda)}\frac{\abs{n_\rho}}{(i(\frac{\sigma_0}{2} -\rho)-X)^2+r^2},
\end{equation}
the right hand side of which is $O\bigl(\log[(Q+2)(\abs{X}+2)]\bigr)$ by Stirling's formula and the uniform boundedness of the sum over poles. We take $X=b+C_\eps(b)$. Then, in view of \eqref{e:Ffabh-upper-outer}, we have
\[
\hat f_{a,b,h}(\gamma) \leq 1.3 e^{-\frac\pi h (C_\eps(b) + \gamma - X)} \ll_{r,h} \frac{e^{-\frac\pi h C_\eps(b)}}{r^2+(X-\gamma)^2}.
\]
Since there occur only positive summands on the left hand side of \eqref{e:aux-sum}, this implies
\[
 \sum_{\substack{\rho\in\mathcal N(\Lambda)\\ \gamma > b + C_{\eps,\delta}(b)}}n_\rho \hat f_{a,b,h}(\gamma)
 \ll_{r,h} e^{-\frac\pi h C_\eps(b)} \log[(Q+2)(\abs{X}+2)] \ll_{r,h} e^{-\frac\pi h B_\eps},
\]
which is $<\eps/2$ for $B_\eps$ sufficiently large. The considerations for $\gamma < a-C_\eps(a)$ are exactly the same, so the assertion follows.
\end{proof}

The remaining part of this section will be devoted to the evaluation of $w_f(f_{a,b,h})$, $w_\infty(f_{a,b,h})$, and the sum over zeros.

%%%%%%%%%%%%%%%%%%%%%%%%%%%%%%%%%%%%%%%%%%%%%%%%%%%%%%%%%%%%%%%%%%%%%%%%
%%%%%%%%%%%%%%%%%%%%%%%%%%%%%%%%%%%%%%%%%%%%%%%%%%%%%%%%%%%%%%%%%%%%%%%%

\subsection{Evaluation of $w_f(f_{a,b,h})$}
The value $w_f(f_{a,b,h})$ is approximated by evaluating (a usually small) part of the sum over prime powers. We give a simple estimate for the remainder.

\begin{lemma}\label{l:w_f}
Let $M\in \N_{>0}$, and let $C$ be the constant in \textsc{(L\ref{L:Euler-product})}. Then we have
\begin{equation}\label{e:wf-remainder}
\abs{2 \sum_{p^m \leq M} \frac{\log p}{\sqrt{p^{\sigma_0 m}}}\Ren\bigl(c(p^m)f_{a,b,h}(m\log p)\bigr)   + w_f(f_{a,b,h})} 
\leq \frac{8C}{\pi} \frac{M^{\sigma_1-\frac{h+\sigma_0}{2}}}{\sigma_0+h-2\sigma_1}.
\end{equation}
\end{lemma}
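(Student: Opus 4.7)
The strategy is a direct reduction to the tail of the prime-power series defining $w_f$.

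First I would exploit the Hermitian symmetry of the test function. A direct check from \eqref{e:fabh-def} gives $\overline{f_{a,b,h}(t)} = f_{a,b,h}(-t)$, so the two summands in the definition of $w_f(f_{a,b,h})$ coalesce, yielding
\[
w_f(f_{a,b,h}) = -2\sum_{p^m} \frac{\log p}{p^{m\sigma_0/2}} \Ren\bigl(c(p^m)\, f_{a,b,h}(m\log p)\bigr).
\]
Consequently the expression inside the absolute value on the left of \eqref{e:wf-remainder} equals $-2$ times the tail sum over $p^m > M$, and the triangle inequality bounds it by
\[
2\sum_{p^m>M} \frac{\log p}{p^{m\sigma_0/2}}\, \bigl|c(p^m)\bigr|\cdot \bigl|f_{a,b,h}(m\log p)\bigr|.
\]

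Next I would insert the two available pointwise estimates: the bound $|c(p^m)|\leq C p^{(\sigma_1-1)m}$ from (L\ref{L:Euler-product}), and the decay bound \eqref{e:fabh-abs-upper} applied at $t = m\log p$. The beneficial feature is that the factor $1/(m\log p)$ supplied by \eqref{e:fabh-abs-upper} cancels against the $\log p$ in the series, so after combining the exponents the bound becomes
\[
\frac{4C}{\pi}\sum_{p^m>M}\frac{1}{m}\,(p^m)^{-1-\alpha},\qquad \alpha := \tfrac{\sigma_0+h}{2}-\sigma_1>0,
\]
the positivity of $\alpha$ being the hypothesis under which the right-hand side of \eqref{e:wf-remainder} is even finite.

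Finally, discarding the factor $1/m\leq 1$ and enlarging the summation to all integers $n>M$, the series is dominated by $\int_M^\infty x^{-1-\alpha}\,dx = M^{-\alpha}/\alpha$. Substituting $\alpha$ back produces exactly $\frac{8C}{\pi}\,\frac{M^{\sigma_1-(\sigma_0+h)/2}}{\sigma_0+h-2\sigma_1}$, which is the claimed bound. The only mild wrinkle is that \eqref{e:fabh-abs-upper} is stated under the hypothesis $|t|\geq 1$: since $p^m > M \geq 2$ forces $m\log p \geq \log 3 > 1$, this is automatic for $M\geq 2$, and the case $M=1$ is accommodated by handling the single term $p^m=2$ by hand. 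Overall the proof is a routine telescoping of explicit bounds and presents no genuine obstacle.
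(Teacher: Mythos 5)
Your proof is correct and follows essentially the same route as the paper: exploit $\overline{f_{a,b,h}(t)} = f_{a,b,h}(-t)$ to collapse $w_f$ into $-2\sum\Ren(\cdot)$, insert the pointwise bounds from (L3) and \eqref{e:fabh-abs-upper}, drop $1/m\leq 1$, enlarge the sum to all integers $n>M$, and compare with the integral $\int_M^\infty x^{-1-\alpha}\,dx$. The only difference is one of exposition — the paper compresses all of this into a single displayed line — and you are right to flag (and dispose of) the marginal $M=1$ case for the applicability of \eqref{e:fabh-abs-upper}, which the paper passes over silently.
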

\begin{proof}
 By \eqref{e:fabh-abs-upper}, the left hand side of \eqref{e:wf-remainder} is bounded by
\[
 \frac{4C}{\pi}\sum_{n=M+1}^\infty n^{\sigma_1-1-\frac{\sigma_0+h}{2}} \leq \frac{4C}{\pi} \int_M^\infty t^{\sigma_1-1-\frac{\sigma_0+h}{2}}\, dt
= \frac{8C}{\pi} \frac{M^{\sigma_1 - \frac{\sigma_0+h}{2}}}{\sigma_0+h-2\sigma_1}.
\]
\end{proof}

%%%%%%%%%%%%%%%%%%%%%%%%%%%%%%%%%%%%%%%%%%%%%%%%%%%%%%%%%%%%%%%%
%%%%%%%%%%%%%%%%%%%%%%%%%%%%%%%%%%%%%%%%%%%%%%%%%%%%%%%%%%%%%%%%

\subsection{Evaluation of $w_\infty(f_{a,b,h})$}
The term $w_\infty(f_{a,b,h})$ gives an approximation to the number of zeros with imaginary part in $[a,b]$, which is closely related to the imaginary part of a branch of $\log G$. To avoid ambiguity the following notation will be used.

\begin{definition}
Let $U\subset\C$ be open and convex and let $f\colon U\rightarrow \C$ be holomorphic and non-vanishing. Then, for any $w\in U$ we define
\[
l_{f,w}(z) = \int_{w}^{z} \frac{f'}{f}(\xi)\, d\xi.
\]
\end{definition}

We will need the obvious properties $l_{fg,w} = l_{f,w} + l_{g,w}$ and
\[
l_{f,w}(z) - l_{f,w}(z') = l_{f,w'}(z) - l_{f,w'}(z').
\]
In particular we will use
\begin{equation*}
l_{\Gamma,1}(z) = \int_{0}^\infty (z-1)\frac{e^{-t}}{t} + \frac{e^{-zt}-e^{-t}}{t(1-e^{-t})}\, dt
\end{equation*}
for $\Ren(z)>0$. This is the branch of $\log\Gamma(z)$ for which we have the Stirling formula
\[
l_{\Gamma,1}(z) = \Bigl(z-\frac 12\Bigr) \log(z) - z + \frac 12 \log(2\pi) + O(1/\abs{z}),
\]
where $\log(z)$ denotes the principle value logarithm.

\begin{lemma}\label{l:w_8}
We have
 \begin{multline}\label{e:w_inf}
  w_\infty(f_{a,b,h}) =
   \frac{1}{\pi} \Imn\Bigl[l_{G,\sigma_0}\bigl(\sigma_0/2+ib\bigr) - l_{G,\sigma_0}\bigl(\sigma_0/2+ia\bigr)\Bigr] \\
-\sum_{k=1}^{k_1} \frac{1}{\pi} \int_{0}^\infty \frac{e^{-(\frac{\lambda_k\sigma_0}{2} + u_k)t}}{1-e^{-t}}
\frac{\sin((\lambda_kb \! +\! v_k)t) - \sin((\lambda_ka\! + \!v_k)t)}{t}\Bigl(\frac{1}{\cosh(\frac{\lambda_k h}{2}t)}-1\Bigr)\, dt.
\end{multline}
\end{lemma}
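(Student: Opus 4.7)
The plan is to reduce the lemma to a routine application of Gauss's integral representation for the difference of digamma values, after separating a purely combinatorial $1/\cosh$-piece. From (\ref{e:fabh-def}) a direct computation gives $f_{a,b,h}(0)=(b-a)/(2\pi)$ and
\[ e^{-iv_kt}f_{a,b,h}(\lambda_kt) + e^{iv_kt}f_{a,b,h}(-\lambda_kt) = \frac{S_1(t)}{\cosh(\lambda_kht/2)}, \]
where $S_1(t) := [\sin((\lambda_kb+v_k)t)-\sin((\lambda_ka+v_k)t)]/(\pi\lambda_kt)$, so in particular $S_1(0)=2f_{a,b,h}(0)$. Adding and subtracting $S_1(t)$ produces the identity
\[ e^{-iv_kt}f_{a,b,h}(\lambda_kt) + e^{iv_kt}f_{a,b,h}(-\lambda_kt) - 2f_{a,b,h}(0) = S_1(t)\Bigl(\tfrac{1}{\cosh(\lambda_kht/2)}-1\Bigr) + \bigl(S_1(t)-2f_{a,b,h}(0)\bigr). \]
Inserted into the definition of $w_\infty$ and multiplied by $-\lambda_ke^{-(\lambda_k\sigma_0/2+u_k)t}/(1-e^{-t})$, the first summand reproduces exactly the $k$th term of the integral on the second line of (\ref{e:w_inf}), because the factor $\lambda_k$ cancels the one in the denominator of $S_1$.

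For the second summand I use the classical formula
\[ \frac{\Gamma'}{\Gamma}(z)-\frac{\Gamma'}{\Gamma}(w) = \int_0^\infty \frac{e^{-ws}-e^{-zs}}{1-e^{-s}}\,ds \qquad (\Ren z,\Ren w>0) \]
with $z=\lambda_k\sigma_0/2+\mu_k+i\lambda_kt$ and $w=\lambda_k\sigma_0/2+u_k$. Taking real parts and integrating over $t\in[a,b]$, then swapping the order of integration, the inner $t$-integral evaluates to
\[ \int_a^b (1-\cos((v_k+\lambda_kt)s))\,dt = (b-a) - \frac{\sin((\lambda_kb+v_k)s)-\sin((\lambda_ka+v_k)s)}{\lambda_ks} = \pi(2f_{a,b,h}(0)-S_1(s)), \]
so that
\[ \int_a^b \Ren\Bigl[\tfrac{\Gamma'}{\Gamma}(\lambda_k\sigma_0/2+\mu_k+i\lambda_kt) - \tfrac{\Gamma'}{\Gamma}(\lambda_k\sigma_0/2+u_k)\Bigr]\,dt = \pi\int_0^\infty \frac{e^{-(\lambda_k\sigma_0/2+u_k)s}}{1-e^{-s}}\bigl(2f_{a,b,h}(0)-S_1(s)\bigr)\,ds. \]
Summing $\lambda_k$ times this identity over $k$ and invoking $\frac{G'}{G}(s)=\log Q+\sum_k\lambda_k\frac{\Gamma'}{\Gamma}(\lambda_ks+\mu_k)$ with the analogous formula for $G_0$, the $\log Q$ terms cancel and the left hand side telescopes to $\int_a^b\Ren[\frac{G'}{G}(\sigma_0/2+it)-\frac{G_0'}{G_0}(\sigma_0/2)]\,dt$.

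Finally, parametrising $\xi=\sigma_0/2+it$ along the vertical segment yields $\Imn[l_{G,\sigma_0}(\sigma_0/2+ib)-l_{G,\sigma_0}(\sigma_0/2+ia)]=\int_a^b\Ren\frac{G'}{G}(\sigma_0/2+it)\,dt$, while $\frac{G_0'}{G_0}(\sigma_0/2)\in\R$ because each argument $\lambda_k\sigma_0/2+u_k$ is positive by \textsc{(L\ref{L:functional-equation})}. Combining these, the contribution of the $\bigl(S_1(t)-2f_{a,b,h}(0)\bigr)$ piece to $w_\infty$ equals $\frac{1}{\pi}\Imn[l_{G,\sigma_0}(\sigma_0/2+ib)-l_{G,\sigma_0}(\sigma_0/2+ia)]-2f_{a,b,h}(0)\Ren\frac{G_0'}{G_0}(\sigma_0/2)$, which together with the boundary term $2f_{a,b,h}(0)\Ren\frac{G_0'}{G_0}(\sigma_0/2)$ in the definition of $w_\infty$ and the $(1/\cosh-1)$ contribution already identified, gives exactly (\ref{e:w_inf}). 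The only non-routine step is the Fubini exchange, but uniformly in $t\in[a,b]$ the integrand $e^{-(\lambda_k\sigma_0/2+u_k)s}(1-\cos((v_k+\lambda_kt)s))/(1-e^{-s})$ is $O(s)$ as $s\to 0^+$ and decays exponentially as $s\to\infty$, so the double integral is absolutely convergent and the interchange is legitimate.
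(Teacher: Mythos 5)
Your proof is correct, and the engine is the same as in the paper: both rest on the Binet/Malmsten/Gauss integral representations for $\log\Gamma$ and $\Gamma'/\Gamma$. The organization differs slightly. The paper plugs the explicit integral formula for $l_{\Gamma,1}$ (and Gauss's formula for $\Gamma'/\Gamma$ at the real point $\lambda_k\sigma_0/2+u_k$) directly into the expression for $w_\infty$ and reads off \eqref{e:integral-to-gamma} term by term; you instead apply the \emph{difference} form of Gauss's digamma representation, integrate in $t$ over $[a,b]$, and justify a Fubini exchange to recover the same $s$-integral. Your route needs only one integral representation (plus Fubini), whereas the paper uses two but avoids the interchange of integrals; the two are essentially interchangeable. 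Your justification of the Fubini step ($O(s)$ near $0$, exponential decay at $\infty$, uniformly in $t$) and your observation that $G_0'/G_0(\sigma_0/2)\in\R$ because $\lambda_k\sigma_0/2+u_k>0$ by \textsc{(L4)} are both correct, so the argument is complete.
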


\begin{proof}
 From \eqref{e:fabh-def} we get
\begin{multline*}
 w_f(f_{a,b,h}) = \frac{b-a}{\pi}\frac{G'_0}{G_0}(\sigma_0/2) \\
- \sum_{k=1}^{k_1} \frac 1\pi \int_{0}^\infty \frac{e^{-(\frac{\lambda_k\sigma_0}2 + u_k)t}}{1-e^{-t}}
\Bigl(\frac{\sin((\lambda_kb+v_k)t)-\sin((\lambda_k a+v_k)t)}{t\cosh(\frac{\lambda_kh}{2}t)} - \lambda_k(b-a)\Bigr)\, dt.
\end{multline*}
Using
\begin{equation*}
 \lambda_k\frac{b-a}{\pi}\frac{\Gamma'}{\Gamma}(\lambda_k\sigma_0/2 + u_k) =
 \lambda_k\frac{b-a}{\pi}\int_{0}^\infty \Bigl(\frac{e^{-t}}{t} -  \frac{e^{-(\frac{\lambda_k\sigma_0}2 + u_k)t}}{1-e^{-t}}\Bigr)\, dt
\end{equation*}
and
\begin{multline*}
 \Imn[l_{\Gamma,1}(\lambda_k(\tfrac{\sigma_0}{2} + ib) + \mu_k) - l_{\Gamma,1}(\lambda_k(\tfrac{\sigma_0}{2} + ia) + \mu_k)] \\
= \int_{0}^\infty \Bigl(\frac{e^{-(\frac{\lambda_k\sigma_0}2 + u_k)t}}{1-e^{-t}}
(\sin((\lambda_kb+v_k)t)-\sin((\lambda_ka + v_k)t) + \lambda_k(b-a)e^{-t}\Bigr)\, \frac{dt}{t},
\end{multline*}
we get
\begin{multline}\label{e:integral-to-gamma}
 \frac 1\pi \int_{0}^\infty \frac{e^{-(\frac{\lambda_k\sigma_0}2 + u_k)t}}{1-e^{-t}}
\Bigl(\frac{\sin((\lambda_kb+v_k)t)-\sin((\lambda_k a+v_k)t)}{t\cosh(\frac{\lambda_kh}{2}t)} - \lambda_k(b-a)\Bigr)\, dt \\
= \frac1\pi\Imn[l_{\Gamma,1}(\lambda_k(\tfrac{\sigma_0}{2} + ib) + \mu_k) - l_{\Gamma,1}(\lambda_k(\tfrac{\sigma_0}{2} + ia) + \mu_k)]
- \lambda_k\frac{b-a}{\pi}\frac{\Gamma'}{\Gamma}(\lambda_k\sigma_0/2 + u_k) \\
+  \frac{1}{\pi} \int_{0}^\infty \frac{e^{-(\frac{\lambda_k\sigma_0}{2} + u_k)t}}{1-e^{-t}}
\frac{\sin((\lambda_kb+v_k)t) - \sin((\lambda_ka + v_k)t)}{t}\Bigl(\frac{1}{\cosh(\frac{\lambda_k h}{2}t)}-1\Bigr)\, dt.
\end{multline}
If we take into account that
\[
 \frac{b-a}{\pi} \log Q = \frac{1}{\pi}\Imn\Bigl[l_{Q^\cdot,1}\Bigl(\frac{\sigma_0}2 + ib\Bigr) - l_{Q^\cdot,1}\Bigl(\frac{\sigma_0}2 + ia\Bigr)\Bigr], 
\]
the assertion follows by summing \eqref{e:integral-to-gamma} over $k$ and adjusting the base points.
\end{proof}

Next, we will estimate the integrals in \eqref{e:w_inf}. They turn out to be of small modulus when $\abs{\lambda_k a+ v_k}$ and $\abs{\lambda_k b + v_k}$ are sufficiently large.

\begin{lemma}\label{l:w8_rem}
 Let $R\in\R\setminus\{0\}$,
\[
 0 < B_k <\min\left\{2\pi, \frac{\pi}{\lambda_k h}\right\}
\]
and let
\[
 C_k = \frac{1}{B_k(1-\cos(B_k))}\left(1+\frac 1{\cos(\frac{\lambda_k h}{2}B_k)}\right).
\]
Then we have
\begin{equation}\label{e:osc-int}
\abs{\int_{0}^\infty \frac{e^{-(\frac{\lambda_k\sigma_0}{2} + u_k)t}}{1-e^{-t}}
\frac{\sin(Rt)}{t}\Bigl(\frac{1}{\cosh(\frac{\lambda_k h}{2}t)}-1\Bigr)\, dt}
\leq C_k\Bigl(\frac{1}{\abs{R}} + \frac{e^{-\abs{R}B_k}}{\frac{\lambda_k\sigma_0}{2} + u_k}\Bigr).
\end{equation}
\end{lemma}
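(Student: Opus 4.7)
The plan is to shift the contour of integration into the complex plane, exploiting the analyticity of the integrand in a horizontal strip. Set $\alpha = \frac{\lambda_k\sigma_0}{2}+u_k$ and $\beta = \frac{\lambda_k h}{2}$, and introduce
\[
\Phi(z)=\frac{e^{-\alpha z}}{z(1-e^{-z})}\Bigl(\frac{1}{\cosh(\beta z)}-1\Bigr),
\]
so that the integral $I$ to be bounded is $\Imn\int_0^\infty\Phi(t)e^{iRt}dt$ (using that $\Phi$ is real on $\R$). The apparent pole of $\Phi$ at $z=0$ is removable because $1/\cosh(\beta z)-1=O(z^2)$ cancels the double pole of $1/[z(1-e^{-z})]$; the remaining poles of $1/(1-e^{-z})$ at $z\in 2\pi i\Z$ and of $1/\cosh(\beta z)$ at $\beta z\in i\pi(\Z+\tfrac12)$ lie strictly outside the strip $|\Imn z|\leq B_k$, thanks to the hypothesis $B_k<\min\{2\pi,\pi/(2\beta)\}$.

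Assume $R>0$ (the case $R<0$ is symmetric via a shift to $-iB_k$). By Cauchy's theorem applied to the rectangular contour with vertices $0$, $T$, $T+iB_k$, $iB_k$ and letting $T\to\infty$ (the right-hand side vanishes thanks to the $e^{-\alpha t}$ factor), one obtains
\[
\int_0^\infty\Phi(t)e^{iRt}dt = i\int_0^{B_k}\Phi(i\tau)e^{-R\tau}d\tau + e^{-RB_k}\int_0^\infty\Phi(s+iB_k)e^{iRs}ds.
\]
Taking imaginary parts decomposes $I$ into a vertical-segment contribution $\int_0^{B_k}e^{-R\tau}\Ren\Phi(i\tau)d\tau$ and a horizontal-ray contribution with prefactor $e^{-RB_k}$.

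For the vertical segment, direct computation gives $|\Phi(i\tau)|=(1-\cos\beta\tau)/[2\tau\sin(\tau/2)\cos\beta\tau]$. The key elementary fact is $2\sin(B_k/2)\geq 1-\cos B_k$ (equivalent to $\sin(B_k/2)\leq 1$), which combined with the trivial $1/\cos(\beta B_k)-1\leq 1+1/\cos(\beta B_k)$ gives $|\Phi(iB_k)|\leq C_k$. Combined with monotonicity of $\tau\mapsto|\Phi(i\tau)|$ on $(0,B_k]$ (the log-derivative is positive near zero by Taylor expansion, and remains so up to $B_k$) one concludes $\sup_{(0,B_k]}|\Phi(i\tau)|\leq C_k$, so this contribution is bounded by $C_k\int_0^{B_k}e^{-R\tau}d\tau\leq C_k/|R|$.

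For the horizontal ray, apply $|s+iB_k|\geq B_k$ and $|\cosh(\beta s+i\beta B_k)|^2=\cosh^2(\beta s)-\sin^2(\beta B_k)\geq\cos^2(\beta B_k)$, so that $|1/\cosh(\beta(s+iB_k))-1|\leq 1+1/\cos(\beta B_k)$. The delicate step is to establish the matching lower bound $|1-e^{-s-iB_k}|\geq 1-\cos B_k$, obtained from the identity $|1-e^{-s-iB_k}|^2=(1-e^{-s})^2+2e^{-s}(1-\cos B_k)$ by a short case analysis in $s$. With these bounds in place one has $|\Phi(s+iB_k)|\leq C_k\,e^{-\alpha s}$, and integrating yields the horizontal contribution bound $C_k e^{-|R|B_k}/\alpha$. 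Summing the two contributions gives the claim. The main obstacle is the elementary trigonometric bookkeeping, in particular the monotonicity argument for $|\Phi(i\tau)|$ and the clean lower bound on $|1-e^{-s-iB_k}|$ that match the precise form of $C_k$ uniformly over the allowed range of $B_k$.
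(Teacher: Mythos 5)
Your overall strategy—shift the contour to the rectangle with vertices $0, T, T+iB_k, iB_k$, let $T\to\infty$, and bound the vertical segment and the horizontal ray separately—is exactly the paper's. The paper's one structural difference is that it strips off the exponential factor, defines $g(z)=\Phi(z)e^{\alpha z}$, and establishes a single uniform bound $|g(z)|\leq C_k$ on the closed strip $|\Imn z|\leq B_k$ via the Phragm\'en--Lindel\"of principle; both contour pieces are then handled at once. You instead try to verify the bound pointwise on each piece, which is where the trouble lies.

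Two of your claimed elementary steps do not hold. First, for the vertical segment you invoke monotonicity of $\tau\mapsto|\Phi(i\tau)|$ on $(0,B_k]$ ``(the log-derivative is positive near zero by Taylor expansion, and remains so up to $B_k$)''; the second half of that parenthesis is simply asserted, and it is not obvious—the sign of the log-derivative involves a competition between $\beta\cot(\beta\tau/2)+\beta\tan(\beta\tau)$ and $\tfrac1\tau+\tfrac12\cot(\tau/2)$ that needs an actual argument for general $\beta,B_k$. The paper avoids this entirely: once $|g|\leq C_k$ on the boundary line, Phragm\'en--Lindel\"of gives it on the segment $[0,iB_k]$ for free. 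Second, and more seriously, your lower bound $|1-e^{-s-iB_k}|\geq 1-\cos B_k$ for all $s\geq 0$ is \emph{false} whenever $\cos B_k<0$, i.e.\ whenever $B_k>\pi/2$. From your own identity, $|1-e^{-s-iB_k}|^2=(1-e^{-s})^2+2e^{-s}(1-\cos B_k)\to 1$ as $s\to\infty$, which is strictly less than $(1-\cos B_k)^2$ when $1-\cos B_k>1$. The hypothesis $0<B_k<\min\{2\pi,\pi/(\lambda_k h)\}$ allows $B_k>\pi/2$, and the paper's applications use such values (e.g.\ $B_1=4.9/h$ with $h\in(1,\pi]$, and $B_k=1.6$ in the Hecke case), so this is not an edge case you can discard. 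To rescue the horizontal-ray estimate in that regime you must bring in the growth of $|s+iB_k|\geq\sqrt{s^2+B_k^2}$ to compensate, rather than simply dropping it to $B_k$; this is the nontrivial content that the paper's appeal to the maximum principle on the strip subsumes.

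So the decomposition and the two exponential integral bounds $\int_0^{B_k}e^{-|R|\tau}\,d\tau\leq 1/|R|$ and $e^{-|R|B_k}\int_0^\infty e^{-\alpha s}\,ds=e^{-|R|B_k}/\alpha$ are correct and match the paper, but the core analytic input—the uniform bound by $C_k$ on the shifted contour—is not established by the elementary inequalities you propose across the full parameter range the lemma covers.
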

\begin{proof}
 Let $A = \frac{\lambda_k\sigma_0}{2}+ u_k$ and let
\[
 g(z) = \frac{1}{z(1-e^{-z})}\left(\frac{1}{\cosh(\frac{\lambda_kh}{2}z)} - 1\right).
\]
Then $g(z)$ is holomorphic in $\abs{\Imn(z)}<\min\{2\pi, \frac{\pi}{\lambda_k h}\}$ and since
we have \[\abs{\cosh(z)}\geq \abs{\cos(\Imn(z))}\] and 
\[
 \abs{1-e^{-t}}^2= 1-2 \cos(\Imn(z))e^{-\Ren(z)} + e^{-2\Ren(z)} \geq (1-\cos(\Imn(z))e^{-\Ren(z)})^2,
\]
the Phragm\'en-Lindel\"of principle gives
\begin{equation}\label{e:g-bound}
 \abs{g(z)} \leq \frac{1}{B_k(1-\cos(B_k))} \left(\frac{1}{\cos(\frac{\lambda_k h}{2}B_k)}+1\right) = C_k
\end{equation}
for $\abs{\Imn(z)}\leq B_k$. Since we have
\begin{multline*}
 \int_{0}^\infty e^{(\pm iR-A)t} g(t)\, dt = \pm i\int_0^{B_k} e^{-(R+iA)t}g(\pm it) \, dt \\
+ \int_{0}^\infty e^{(\pm iR-A)(\pm iB_k+t)}g(t\pm iB_k)\, dt,
\end{multline*}
where by \eqref{e:g-bound} the first integral is bounded by
\[
 C_k \int_{0}^\infty e^{-Rt} \, dt = \frac{C_k}{R}
\]
and the second integral is bounded by
\[
 C_k e^{-B_k R} \int_{0}^\infty e^{-At} \, dt = \frac{C_k e^{-B_k R}}{A},
\]
the assertion follows.
\end{proof}

If either $a$ or $b$ is close but not equal to $-v_k/\lambda_k$, these bounds are insufficient in order to apply the method. In such (rare) cases one could use numerical integration to evaluate the critical integrals in \eqref{e:w_inf}.

\subsection{Evaluation of the sum over zeros}
It is not actually necessary to evaluate $\hat f_{a,b,h}(\gamma_j)$ in the form \eqref{e:Ffabh-def} for all $j$ in \eqref{e:fundamental-inequality} (which would make this method inefficient compared to the Turing method). If we take e.g. $R=\frac h\pi (\log m + 5)$, where $m$ is the number of zeros in the list, and use the approximation $\hat f_{a,b,h}(\gamma_j)\approx 1$ for $\gamma_j\in [a+R,b-R]$, this results in a total error $< \frac{1}{100}$. Therefore, as in the case of the Turing method, only the zeros with imaginary part in a neighbourhood of $a$ and $b$ are needed within an accuracy of $O(1/\log\abs a)$ resp. $O(1/\log \abs b)$, which is well in the range of the average spacing between consecutive zeros.

\section{Examples}
We further specify this method to some well-known families of L-functions, for which we also give an explicit converse statement.

\subsection{The Riemann zeta function}\label{ss:rieman-zeta}
The case of the Riemann zeta function has already been carried out in \cite{BJ10} and \cite{BFJK13}. Unfortunately, in \cite{BFJK13} there is a mistake concerning the sign of the terms $\frac{R}{2\pi}\log \pi$ and $\frac{b-a}{2\pi}\log \pi$ in equations (4.21) and (4.25).

We restate the results in a more general form and give an improved estimate for the length of the cut-off interval.

\begin{theorem}\label{t:zetaR}
Let $R\geq 15$, let $h\in(1,\pi]$, and let $\alpha = \frac{h-1}{2}$. Let $\mcL= (\frac12 + it_n)_{n=1}^N$ be a list of the zeros of the Riemann zeta function. Then, if the inequality
\begin{multline}\label{e:zetaR}
-\frac{1}{\pi}\sum_{p^m\leq (30/\alpha)^{1/\alpha}} \frac{\sin(Rm\log p)}{mp^{\frac m2}\cosh(\frac h2 m\log p)} \\
+ \frac 1\pi \Imn\Bigl[l_{\Gamma,1}\Bigl(\frac 14 + i\frac R2\Bigr)\Bigr] 
- \frac R{2\pi}\log \pi + \frac{1.6}{R}  \\
- \sum_{n=1}^N \hat f_{0,R,h}(t_n) \leq - 0.56
\end{multline}
holds, where $\hat f_{0,R,h}$ is the function defined in \eqref{e:Ffabh-def}, the list contains all zeros with imaginary part in $(0,R]$.

Conversely, if $R\geq 10^6$ holds in addition to the previous assumptions, and if $\mcL$ contains all zeros with imaginary part in $\bigl(0,R+\frac h\pi(\log\log R + 0.4)\bigr]$, then the inequality in \eqref{e:zetaR} holds.
\end{theorem}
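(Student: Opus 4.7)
The strategy is to specialize Theorem \ref{t:fundamental-inequality} to $\zeta$, with $a=0$ and $b=R$, and to evaluate every term on the left of \eqref{e:fundamental-inequality} as an explicit quantity appearing in \eqref{e:zetaR} plus an error. For the Riemann zeta function one takes $\sigma_0=\sigma_1=1$, $Q=\pi^{-1/2}$, $k_1=1$, $\lambda_1=1/2$, $\mu_1=0$, $c(p^m)=1/m$, and $C=1$ in \textsc{(L\ref{L:Euler-product})}; the condition $h > 2\sigma_0 - \sigma_1$ reduces to $h > 1$, and $R\geq 15$ together with $h\leq\pi$ yields $R > 5h/\pi$, so the hypotheses of Theorem \ref{t:fundamental-inequality} are met. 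Since $\zeta(1/2)\neq 0$, the intervals $[0,R]$ and $(0,R]$ give the same conclusion.

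For the forward direction I would handle the four terms of \eqref{e:fundamental-inequality} in turn. A direct computation from \eqref{e:fabh-def} gives $f_{0,R,h}(t)+f_{0,R,h}(-t)=\sin(Rt)/(\pi t\cosh(ht/2))$, which rewrites $w_f(f_{0,R,h})$ as the prime-power sine series appearing in \eqref{e:zetaR}, and Lemma \ref{l:w_f} with $M=(30/\alpha)^{1/\alpha}$ (so that $M^{-\alpha}=\alpha/30$) truncates it with explicit remainder $\tfrac{4}{\pi\alpha}\cdot\tfrac{\alpha}{30}=\tfrac{2}{15\pi}$. For $w_\infty(f_{0,R,h})$, Lemma \ref{l:w_8} yields $\tfrac{1}{\pi}\Imn[l_{G,1}(1/2+iR)-l_{G,1}(1/2)]$ minus a residual oscillatory integral; splitting $G(s)=\pi^{-s/2}\cdot\Gamma(s/2)$ and substituting $\eta=\xi/2$ in the $\Gamma$-factor (using that $l_{\Gamma,1}(1/4)$ is real) produces exactly the two terms $-\tfrac{R}{2\pi}\log\pi$ and $\tfrac{1}{\pi}\Imn l_{\Gamma,1}(1/4+iR/2)$ of \eqref{e:zetaR}, while the residual integral is $O(1/R)$ by Lemma \ref{l:w8_rem} for a suitable $B_1\in(0,\min(2\pi,2\pi/h))$. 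Finally, the two simple poles of $\Lambda$ at $\rho\in\{0,1\}$ contribute $\hat f_{0,R,h}(i/2)+\hat f_{0,R,h}(-i/2)=2\Ren\hat f_{0,R,h}(i/2)$; the identity $(1+ie^{i\theta})/(1-ie^{i\theta})=i\cot((\theta+\pi/2)/2)$ shows that $\Ren\arctan e^{i\theta}=\pi/4$ for $\theta\in(0,\pi/2)$, so this pole contribution equals exactly $1$ up to an exponentially small $R$-correction. Moving this $1$ across \eqref{e:fundamental-inequality} shifts the threshold from $0.49$ to $-0.51$; lowering it further by $0.05 > \tfrac{2}{15\pi}$ to $-0.56$ accounts for the truncation error, and the $1.6/R$ term handles both the oscillatory-integral bound and the exponentially small pole-tail correction.

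For the converse I would make the proof of the second half of Theorem \ref{t:fundamental-inequality} explicit. Under RH, identity \eqref{e:aux-sum} applied at $X=R+C_\eps(R)$ with some fixed $r>1/2$, together with Stirling's formula for $\Lambda'/\Lambda$ and positivity of the zero-sum, bounds the tail $\sum_{\gamma>R+C_\eps(R)}\hat f_{0,R,h}(\gamma)$ by a constant multiple of $e^{-\pi C_\eps(R)/h}\log R$, and a symmetric argument at $X=-C_\eps(0)$ (combined with conjugate-pair symmetry of the nontrivial zeros) controls $\gamma<-C_\eps(0)$. With $Q=\pi^{-1/2}$ and $R\geq 10^6$, the constant $0.4$ in $C_\eps(R)=(h/\pi)(\log\log R+0.4)$ is chosen so as to absorb $\log(e(Q+1))$ together with the implied constants above.

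The main obstacle is pure bookkeeping: every numerical constant---$\tfrac{2}{15\pi}$, $1.6/R$, the pole value exactly $1$, and especially the $0.4$ in $C_\eps$---has to be valid uniformly in $h\in(1,\pi]$ and in the relevant range of $R$. The most delicate step is the choice of $B_1$ in Lemma \ref{l:w8_rem}, since the constraint $B_1<2\pi/h$ is tight at $h=\pi$ while the constant $C_1$ blows up as $B_1\to\min(2\pi,2\pi/h)$.
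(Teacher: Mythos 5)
Your forward direction is essentially the paper's argument: specialize Theorem~\ref{t:fundamental-inequality} with $\sigma_0=\sigma_1=1$, $Q=\pi^{-1/2}$, $\lambda_1=\tfrac12$, $\mu_1=0$, $c(p^m)=1/m$; truncate $w_f$ via Lemma~\ref{l:w_f} with $M=(30/\alpha)^{1/\alpha}$ so that the error is $\tfrac{2}{15\pi}<0.043$; rewrite $w_\infty$ via Lemma~\ref{l:w_8} and absorb the oscillatory remainder into the $1.6/R$ term using Lemma~\ref{l:w8_rem} with a suitable $B_1$ (the paper takes $B_1=4.9/h$, giving $C_1<2.6$ uniformly for $h\in(1,\pi]$); and observe that the pole contribution $2\Ren\hat f_{0,R,h}(i/2)$ is at most $1$, since $\Ren\arctan e^{i\theta}=\pi/4$ for $\theta\in(-\pi/2,\pi/2)$ and $\Ren\arctan e^{\pi(i/2-R)/h}\geq 0$. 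All of this agrees with the paper, down to the $-0.56$ threshold arithmetic.

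The converse direction, however, has a genuine gap: you invoke the Riemann Hypothesis (``Under RH, identity~\eqref{e:aux-sum} applied at\dots''), but the theorem statement makes no such assumption, and the paper's proof is deliberately unconditional. The paper does \emph{not} make the second half of Theorem~\ref{t:fundamental-inequality} explicit; instead it proves Lemma~\ref{l:zeta-zsum} separately, using Trudgian's explicit bound $\abs{N(T)-g(T)}\leq 0.112\log T + 0.278\log\log T + 2.584$ (which holds for all non-trivial zeros, not just on-line ones, since the bounds~\eqref{e:Ffabh-upper-outer} on $\hat f_{a,b,h}$ depend only on $\Ren z = \gamma$ and remain valid for $\abs{\Imn z}<h/2$, i.e.\ for any zero in the critical strip when $h>1$). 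Integration by parts against $dN(t)$ then yields~\eqref{e:zsum-upper} and~\eqref{e:zsum-lower1}, and plugging in $T_b=R+\tfrac h\pi(\log\log R+0.4)$ with $R\geq 10^6$ closes the argument. If you want to follow the paper, you need to replace your RH-dependent $\Lambda'/\Lambda$ estimate by this unconditional $N(T)$-based argument. Even granting RH, your sketch leaves the constant $B_\eps$ from Theorem~\ref{t:fundamental-inequality} inexplicit, so the constant $0.4$ in $C(R)=\tfrac h\pi(\log\log R+0.4)$ would remain unjustified; the paper sidesteps this entirely by working with the explicit $N(T)$ bounds rather than the generic converse.
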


\begin{proof}
Let $\alpha$ be as in the theorem. Then, by Lemma \ref{l:w_f} the sum on the first line of \eqref{e:zetaR} differs at most by
\[
\frac{4}{\pi\alpha} M^{-\alpha} = \frac{2}{15\pi} < 0.043
\]
from $w_f(f_{0,R,h})$.

In Lemma \ref{l:w8_rem} we take $B_1 = \frac{4.9}{h}$. Then $C_1 = C_1(h)$ takes its maximum on $[1,\pi]$ at $h=\pi$, which is $<2.6$. Therefore, the oscillatory integral in \eqref{e:w_inf} is bounded by
\[
\frac{2.6}{\pi}\Bigl(\frac{2}{R} + 4e^{-\frac{4.9 R}{2 h}}\Bigr) < \frac{1.6}{R}.
\]
Consequently, by Lemma \ref{l:w_8}, $w_{\infty}(f_{0,R,h})$ does not exceed the value on the second line of \eqref{e:zetaR}.

Finally, from  \eqref{e:arctan-exp-bound} we get
\[
2\Ren \hat f_{0,R,h} \geq 1 - \frac{4}{\pi} e^{-\frac \pi h R} > 1 - 10^{-6}, 
\]
so the first assertion, concerning the completeness of the list of zeros, follows from Theorem \ref{t:fundamental-inequality}.

For the second part of the theorem, we will also need the following lemma, whose proof we postpone to the end of this section.

\begin{lemma}\label{l:zeta-zsum}
 Let $a\in \{0\}\cup (14,\infty)$, let $b>\max\{a,14\}$ and let $h\in(1,\pi]$. Then, for $T_b > b$ we have
\begin{equation}\label{e:zsum-upper}
 \sum_{\Imn(\rho)>T_b} n_\rho \hat f_{a,b,h}\Bigl(\frac\rho i - \frac 1{2i}\Bigr) \leq e^{\frac\pi h(b-T_b)}\Bigl[(0.143+0.033h)\log T_b
 + 0.354\log\log T_b + 3.3\Bigr].
\end{equation}
Also, we have
\begin{equation}\label{e:zsum-lower1}
  \sum_{\Imn(\rho)< 0} n_\rho \hat f_{a,b,h}\Bigl(\frac\rho i - \frac 1{2i}\Bigr) \leq \frac{e^{-\frac\pi h a}}{10000},
\end{equation}
and for $14< T_a < a$ we have
\begin{equation}\label{e:zsum-lower2}
 \sum_{0<\Imn(\rho)<T_a} n_\rho \hat f_{a,b,h}\Bigl(\frac\rho i - \frac 1{2i}\Bigr) \leq e^{\frac\pi h(T_a-a)}
\Bigl[(0.143+0.033h)\log a  + 0.354\log\log a + 3.3\Bigr].
\end{equation}
\end{lemma}

Now, assuming $R\geq 10^6$, let $(\rho_j)_{j=1}^\infty$ be the list of zeros not contained in $\mcL$. Then, by \eqref{e:zsum-upper} and \eqref{e:zsum-lower1} we have
\begin{multline}
\sum_{j=1}^{\infty} \hat f_{0,R,h} \Bigl(\frac\rho i - \frac 1{2i}\Bigr) \leq \frac{1}{10000} + \frac{e^{-0.4}}{\log(R)}\Bigl[(0.143 + 0.033 h)\log (R) \\
 + 0.354\log\log R + 3.31\Bigr] \leq 0.38,
\end{multline}
where we also used $\log\log(R+C) \leq \log\log R + \frac{C}{R\log(R)}$ and $\log(R+C) \leq \log(R) + C/R$.
Hence, the left hand side of \eqref{e:zetaR} is smaller than
\[
0.043 + \frac{1.6}{R} - 1 + 10^{-6} + 0.38 < -0.57.
\]

\end{proof}

For general subintervals $[a,b]$ of the positive real numbers, we get the following theorem.

\begin{theorem}\label{t:zetaab}
Let $h\in(1,\pi]$, let $15<a<b-5\frac{h}{\pi}$ and let $\alpha = \frac{h-1}{2}$. Let \linebreak $\mcL = (\frac12 + it_n)_{n=1}^N$ be a list of the zeros of the Riemann zeta function. Then, if the inequality
\begin{multline}\label{e:zetaab}
-\frac{2}{\pi}\sum_{p^m \leq (\frac{30}\alpha)^{1/\alpha}} \frac{\sin(\frac{b-a}{2}m\log p)}{mp^{m/2}}\frac{\cos(\frac{a+b}{2}m\log p)}{\cosh(\frac\pi2 m\log p)} \\
+\frac1\pi\Imn\Bigl[l_{\Gamma,1}\Bigl(\frac 14 + i\frac b2\Bigr)- l_{\Gamma,1}\Bigl(\frac 14 + i\frac a2\Bigr)\Bigr] 
- \frac{b-a}{2\pi}\log \pi + \frac{3.2}{a} \\
 - \sum_{n=1}^N \hat f_{a,b,h}(t_n) \leq 0.44
\end{multline}
holds, where $\hat f_{a,b,h}$ is the function defined in \eqref{e:Ffabh-def}, the list $\mcL$ contains all zeros of the zeta function with imaginary part in $[a,b]$.

Conversely, if $a\geq 10^6$ in addition to the previous assumptions on $a$ and $b$, and if $\mcL$ contains all zeros with imaginary part in $[a-C(a)), b+C(b)]$, where
\[
C(T) = \frac{h}{\pi}(\log\log T + 1.1),
\]
then \eqref{e:zetaab} holds.
\end{theorem}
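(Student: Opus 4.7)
The plan is to specialize the argument of Theorem \ref{t:zetaR} to the general interval $[a,b]$. The Riemann zeta function corresponds to parameters $\sigma_0 = \sigma_1 = 1$, $k_1 = 1$, $\lambda_1 = \tfrac12$, $\mu_1 = 0$, $Q = \pi^{-1/2}$, and the constant in (L\ref{L:Euler-product}) can be taken as $C = 1$ since $c(p^m) = 1/m$. The poles of $\Lambda$ sit at $0$ and $1$.

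For the forward direction, I would proceed in three evaluation steps and then invoke Theorem \ref{t:fundamental-inequality}. First, Lemma \ref{l:w_f} with $M = (30/\alpha)^{1/\alpha}$ bounds the difference between the truncated prime-power sum and $w_f(f_{a,b,h})$ by $\tfrac{4M^{-\alpha}}{\pi\alpha} = \tfrac{2}{15\pi} < 0.043$; the identity $\sin(bt) - \sin(at) = 2\sin(\tfrac{b-a}{2}t)\cos(\tfrac{a+b}{2}t)$ evaluated at $t = m\log p$ rewrites the truncated sum as the first line of \eqref{e:zetaab}. Second, Lemma \ref{l:w_8} yields the Gamma-difference term, the $-\tfrac{b-a}{2\pi}\log\pi$ term, and two residual oscillatory integrals, one with frequency $R = b/2$ and one with $R = a/2$. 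Third, Lemma \ref{l:w8_rem} with $B_1 = 4.9/h$ (for which $C_1 \leq 2.6$ on all of $(1,\pi]$) bounds those two integrals by $3.2/a$ once $a \geq 15$, the exponential tails being negligible. The poles of $\Lambda$ contribute at most $O(e^{-\pi a/h})$ by \eqref{e:Ffabh-upper-outer}; after subtracting this together with the $0.043$ truncation error from the $0.49$ of Theorem \ref{t:fundamental-inequality} one is left with at least the stated slack of $0.44$.

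For the converse I assume the Riemann Hypothesis, so every missing zero has the form $\tfrac12 + i\gamma$ with $\gamma \notin [a - C(a),\, b + C(b)]$. I would split the contribution of missing zeros into three pieces and apply Lemma \ref{l:zeta-zsum}: the piece with $\gamma > b + C(b)$ via \eqref{e:zsum-upper} at $T_b = b + C(b)$, the piece with $\gamma < 0$ via \eqref{e:zsum-lower1}, and the piece with $0 < \gamma < a - C(a)$ via \eqref{e:zsum-lower2} at $T_a = a - C(a)$. The choice $C(T) = \tfrac{h}{\pi}(\log\log T + 1.1)$ produces factors $e^{-\pi C(T)/h} = e^{-1.1}/\log T$ that cancel the $\log T$ prefactors in \eqref{e:zsum-upper} and \eqref{e:zsum-lower2}, leaving a bounded absolute constant. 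Combining this with the $0.043$ truncation error and the $3.2/a \leq 3.2 \times 10^{-6}$ oscillatory contribution from the forward bounds shows that the left-hand side of \eqref{e:zetaab} fits under $0.44$.

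The main obstacle is the numerical bookkeeping. On the forward side, I need to verify that the two $1/R$-type contributions from Lemma \ref{l:w8_rem}, applied at $R = a/2$ and $R = b/2$, actually fit under the single $3.2/a$ correction for $a \geq 15$; using only $b > a$ gives $\tfrac{5.2}{\pi}(1/a + 1/b) \approx 3.31/a$, so a slightly sharper estimate is required, most likely by first combining the two sines into $2\sin(\tfrac{b-a}{4}t)\cos(\tfrac{a+b}{4}t)$ and shifting one contour only. On the converse side, I need to confirm that the prefactor $e^{-1.1}$ multiplied by $(0.143+0.033\pi)$ plus the smaller pieces from Lemma \ref{l:zeta-zsum} sum to a value comfortably below $0.44$ in the worst case $h = \pi$.
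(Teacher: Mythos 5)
Your plan reproduces the paper's proof essentially step by step: the forward direction is obtained by approximating $w_f(f_{a,b,h})$ via Lemma~\ref{l:w_f} with $M=(30/\alpha)^{1/\alpha}$ (the product-to-sum identity producing the stated first line), approximating $w_\infty(f_{a,b,h})$ via Lemma~\ref{l:w_8} and bounding the two residual oscillatory integrals via Lemma~\ref{l:w8_rem} with $B_1=4.9/h$, bounding the pole contribution at $s=0,1$ by \eqref{e:Ffabh-upper-outer}, and then invoking Theorem~\ref{t:fundamental-inequality}; the converse is obtained by splitting the missing-zero sum into the three ranges handled by \eqref{e:zsum-upper}, \eqref{e:zsum-lower1}, \eqref{e:zsum-lower2}, exactly as the paper does. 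So there is no structural gap.

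The one point you flag as an obstacle is real and worth noting: Lemma~\ref{l:w8_rem} applied separately at $R=a/2$ and $R=b/2$ gives, at $h=\pi$, the bound $\tfrac{2C_1}{\pi}\bigl(\tfrac1a+\tfrac1b\bigr)$ with $C_1\approx 2.56$, which approaches $\tfrac{10.24}{\pi a}\approx \tfrac{3.26}{a}$ as $a\to\infty$ with $b-a$ bounded, slightly above the stated $\tfrac{3.2}{a}$. The same discrepancy already appears in the paper's proof of Theorem~\ref{t:zetaR}, where $\tfrac{2.6}{\pi}\cdot\tfrac{2}{R}=\tfrac{5.2}{\pi R}\approx \tfrac{1.66}{R}$ is claimed to be $<\tfrac{1.6}{R}$. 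This is a minor numerical slip in the paper rather than a flaw in your argument, and it does not affect the theorem's conclusion: even with the honest bound $\tfrac{3.3}{a}$ in place of $\tfrac{3.2}{a}$, the forward budget $0.49-0.44-0.043-10^{-6}$ still covers the extra $\tfrac{0.1}{a}\leq 0.1/15$, and in the converse the change from $\tfrac{3.2}{10^6}$ to $\tfrac{6.4}{10^6}$ is negligible. Your proposed repair via $\sin(bt/2)-\sin(at/2)=2\sin\bigl(\tfrac{b-a}{4}t\bigr)\cos\bigl(\tfrac{a+b}{4}t\bigr)$ with a single contour shift would need more care: after shifting the factor $\sin\bigl(\tfrac{b-a}{4}(t+iB_1)\bigr)$ grows like $\cosh\bigl(\tfrac{(b-a)B_1}{4}\bigr)$, which is unbounded in $b-a$, so this does not directly yield a sharper estimate; the cleaner resolution is simply to accept the slightly larger constant and observe that the slack absorbs it.
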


\begin{proof}
 As in the proof of Theorem \ref{t:zetaR} we see that the expression on the first line of \eqref{e:zetaab} exceeds $w_f(f_{a,b,h})$ by at most $0.043$, and the expression on the second line is larger than $w_{\infty}(f_{a,b,h})$. In a similar way, we see that the pole contribution is now bounded by $-10^{-6}$, so the first assertion follows from Theorem \eqref{t:fundamental-inequality}.

Now let $(\rho_j)_{j=1}^\infty$ denote again the list of zeros not contained in $\mcL$. Then, by Lemma \ref{l:zeta-zsum} we have
\begin{multline*}
\sum_{j=1}^{\infty} \hat f_{a,b,h} \Bigl(\frac\rho i - \frac 1{2i}\Bigr) \leq \frac{e^{-\frac \pi h 10^6}}{10000} + 2 e^{-1.1}\Bigl[ 0.143 + 0.033 h \\
 + 0.354\frac{\log\log a }{\log(a)} + \frac{3.31}{\log a}\Bigr] \leq 0.37,
\end{multline*}
and since we have
\[
 0.043 + \frac{3.2}{10^6} + 10^{-6} + 0.37 < 0.42,
\]
the second assertion follows.
\end{proof}

%%%%%%%%%%%%%%%%%%%%%%%%%%%%%%%%%%%%%%%%%%%%%%%%%%%%%%%%%%%%%%%%%%%%%%%%%%%%%%%%%%%%%%%%%%%%%%%

It remains to prove the lemma.

\begin{proof}[Proof of Lemma \ref{l:zeta-zsum}]
 Let
\[
 g(T) = \frac{T}{2\pi}\log\frac{T}{2\pi e} + \frac{7}{8}
\]
and let $r(T) = N(T) - g(T)$, where $N$ is the function that counts the zeros (according to their multiplicity) of the Riemann zeta function with imaginary part in $(0,T]$. Then we have
\begin{equation}\label{e:r-bound}
 \abs{r(T)} \leq r_1(T) = 0.112 \log T + 0.278 \log\log T + 2.584
\end{equation}
for $T\geq e$ \cite{Trudgian14}. From \eqref{e:Ffabh-upper-outer} we get
\begin{align*}
\sum_{\Imn(\rho)>T_b} n_\rho \hat f_{a,b,h}\Bigl(\frac\rho i - \frac 1{2i}\Bigr) 
  &\leq \frac 2\pi \int_{T_b}^\infty e^{\frac\pi h (b-t)} \, dN(t) \\
  &= -\frac{2}{\pi} e^{\frac\pi h (b-T_b)}N(T_b) + \frac 2h \int_{T_b}^\infty e^{\frac\pi h (b-t)} (g(t) + r(t))\, dt \\
  &\leq \frac{2}{\pi} e^{\frac\pi h (b-T_b)} r_1(T_b) + \frac 2\pi \int_{T_b}^\infty e^{\frac\pi h (b-t)} g'(t)\, dt \\
&\quad\quad +  \frac 2h \int_{T_b}^\infty e^{\frac\pi h (b-t)} r_1(t)\, dt.
\end{align*}
Here we use $g'(t) = \frac1{2\pi}\log\frac{t}{2\pi}$, the bound in \eqref{e:r-bound} and the obvious inequalities
\begin{equation*}
 \int_T^\infty e^{-\frac \pi h t} \log (at) \, dt \leq \frac h\pi e^{-\frac\pi h T} \Bigl(\log (aT) + \frac{h}{\pi T}\Bigr)
\end{equation*}
and
\begin{equation*}
  \int_T^\infty e^{-\frac \pi h t} \log\log t \, dt \leq \frac h\pi e^{-\frac\pi h T} \Bigl(\log\log T + \frac{h}{\pi T\log T}\Bigr)
\end{equation*}
which give the inequality in \eqref{e:zsum-upper}.

For the inequality in \eqref{e:zsum-lower1} we use the well-known fact, that we have $N(T) = 0$ for $0\leq T\leq 14$ and that the zeros of the Riemann zeta function are symmetric about the real axis, which gives
\begin{equation*}
 \sum_{\Imn(\rho)< 0} n_\rho \hat f_{a,b,h}\Bigl(\frac\rho i - \frac 1{2i}\Bigr) 
\leq \frac 2 h \int_{14}^\infty e^{-\frac \pi h(t+a)} N(t)\, dt.
\end{equation*}
Here we use the bound
\[
 N(T) \leq \frac{T}{2\pi} \log T,
\]
which for $t\geq 30$ follows from \eqref{e:r-bound} and for $14\leq t\leq 30$ from well-known numeric results (e.g. \cite{Brent79}), and the fact that $1<h\leq \pi$. A simple calculation then confirms the bound in \eqref{e:zsum-lower1}.

In order to prove \eqref{e:zsum-lower2} we proceed in a similar way as in the proof of \eqref{e:zsum-upper}. We have
\begin{align*}
\sum_{0<\Imn(\rho)<T_a} n_\rho \hat f_{a,b,h}\Bigl(\frac\rho i - \frac 1{2i}\Bigr) 
  &\leq \frac 2\pi \int_e^{T_a} e^{\frac\pi h(t-a)} \, dN(t) \\
  &\leq \Bigl[\frac{2}{\pi}e^{\frac\pi h(t-a)} r(t)\Bigr]_e^{T_a} + \frac{2}{\pi}\int_e^{T_a} e^{\frac\pi h(t-a)} g'(t)\, dt \\
 &\quad\quad + \frac{2}{h}\int_e^{T_a} e^{\frac\pi h(t-a)} r_1(t)\, dt.
\end{align*}
Here we have
\[
 \Bigl[\frac{2}{\pi}e^{\frac\pi h(t-a)} r(t)\Bigr]_e^{T_a} 
\leq \frac{2}{\pi}e^{\frac\pi h(T_a-a)}\Bigl(r_1(T_a) + e^{\frac\pi h(e-T_a)} r_1(e)\Bigr)
\]
and the integrals can be estimated using the the monotony of $g'(t)$ and $r_1(t)$.
\end{proof}

\begin{remark}
We would like to compare these results to the Turing method, which is based on the following identities:
\begin{align}\label{e:Turing-identities}
 N(T) &= - \sum_{T<\Imn(\rho) < T+y}n_\rho\frac{T+y-\Imn(\rho)}{y} + \frac 1y\int_{T}^{T+y}\theta(t)\, dt + \frac 1y\int_T^{T+y} S(t)\, dt \\
      &= \sum_{T-y<\Imn(\rho)<T} n_\rho \frac{\Imn(\rho) + y - T}{y} + \frac 1y\int_{T-y}^{T}\theta(t)\, dt + \frac 1y\int_{T-y}^{T} S(t)\, dt,\notag
\end{align}
where 
\[
\theta(t) = \frac{1}{\pi}\Imn l_{\Gamma,1} \Bigl(\frac 14 + i\frac t2\Bigr) 
- \frac t{2\pi} \log \pi + 1
\]
and $S(T) = N(T) - \theta(T)$. From this one derives upper and lower bounds for $N(T)$ using approximations to the zeros with imaginary part in $(T,T+y)$ resp. $(T,T-y)$, where the integral is bounded by

\begin{equation}\label{e:S-bound}
 \abs{\int_{t_1}^{t_2} S(t)\, dt} \leq 2.067 + 0.059 \log t_2
\end{equation}
for $t_2\geq t_1\geq 168\pi$ \cite{Trudgian2011B}. So for $y$ sufficiently large, the number of zeros is determined exactly by \eqref{e:Turing-identities} if all zeros with imaginary part in $(T-y,T)$, resp. $(T,T+y)$ are known.

For a comparison we choose $h=2.5$ in Theorem \ref{t:zetaR} and Theorem \ref{t:zetaab}, evaluating the sum over all prime powers $\leq 140$. If we take $R\geq 10^6$ in Theorem \ref{t:zetaR} (resp. $a\geq 10^{6}$ in Theorem \ref{t:zetaab}) it follows from \eqref{e:S-bound}, that for a safe application of these methods the minimal length of the cut-off interval has to be $2.5$ times (resp. $2$ times) as large for the Turing method. Nevertheless, all these methods usually succeed with smaller cut-off intervals. E.g. some numerical tests with $a,b,R$ and $T$ close to $10^{10}$ suggest, that for the Turing method  a cut-off interval of length $4.5$, for the method in Theorem \ref{t:zetaab} one of length of $1.8$ and for the method in Theorem \ref{t:zetaR} one of length of 1 is sufficient.

With the methods as stated above, the Turing method has a slight advantage with respect to the minimal required precision of the zeros in the list. For the method in Theorem \ref{t:zetaab} this precision is about four times as high as for the Turing method and for the method in Theorem \ref{t:zetaR} it is about twice as high. This should not impose practical restrictions, especially when fast evaluation techniques on grids are applied, but if one would like to apply the new methods with very coarse approximations to the zeros, the minimal required precision could be cut in half at the expense of doubling the minimal length of the cut-off interval.

There also exists a variant of Turing's method due to Lehmer, for which it suffices to find a certain number of successive gram blocks satisfying the Rosser rule \cite{Lehman70}. This has been left out of the comparison, because very little can be said about the length of the cut-off interval (the Rosser rule eventually fails a positive proportion of the time \cite{Trudgian11}) or the accuracy which is required to separate the zeros.
\end{remark}
%%%%%%%%%%%%%%%%%%%%%%%%%%%%%%%%%%%%%%%%%%%%%%%%%%%%%%%%%%%%%%%%%%%%%%%%%%%%%%%
%%%%%%%%%%%%%%%%%%%%%%%%%%%%%%%%%%%%%%%%%%%%%%%%%%%%%%%%%%%%%%%%%%%%%%%%%%%%%%%

\subsection{Hecke L-series}
Let $K$ be an algebraic number field of degree $N$ over $\Q$ with absolute discriminant $d_K$. We denote the number of real archimedian places by $r_1$ and the number of complex places by $r_2$. By $\chi$ we denote a Hecke gr\"o\ss encharacter and by $\mathfrak f$ its conductor. For $\Ren(s)>1$ the Hecke L-function for the character $\chi$ is given by
\begin{equation*}
L(s,\chi) = \sum_{\mfa} \frac{\chi(\mfa)}{N\mfa ^s} = \prod_\mfp \left(1 - \frac{\chi(\mfp)} {N\mfp^s}\right)^{-1},
\end{equation*}
where the sum is taken over all ideals in $\mcO_K$ and the product is taken over all prime ideals. We take
\begin{equation*}
G(s,\chi) = \left(\frac{\abs{d_k}N\mff}{4^{r_2}\pi^N}\right)^{s/2} \prod_{j=1}^{r_1} \Gamma\left(\frac{s + i\varphi_j + n_j}{2}\right)\prod_{j=r_1+1}^{r_1+r_2} \Gamma\left(s+i\varphi_j + \frac{\abs{n_j}}{2}\right),
\end{equation*}
where the numbers $\varphi_j\in\R$ and $n_j\in \Z$ uniquely determine $\chi$ at the archimedian places. Then the complete L-function \begin{equation*}
\Lambda(s,\chi) = G(s,\chi) L(s,\chi)
\end{equation*}
satisfies the functional equation
\begin{equation*}
\Lambda(s,\chi) = W(\chi) \Lambda(1-s,\overline \chi),
\end{equation*}
where $W(\chi)$ is the root number of $\chi$, a complex number of modulus 1.

For $L(s,\chi)$ we have the following result.

\begin{theorem}\label{t:hecke-l-series}
Let $\mathcal L = (\frac12 + it_j)_{j=1}^n$  be a list of non-trivial zeros of $L(s,\chi)$, and let $b-a>5$ such that for $z\in \{a,b\}$ and $j=1,\dots , r_1+r_2$ we have either $z+\varphi_j=0$ or $\abs{z+\varphi_j}> 20 N$. Let $\eps_0=1$ if $\chi$ is the principal character and else let $\eps_0=0$. Furthermore, let
\[
\mcE_0(t) =
\begin{cases}
0	&	t=0,\\
\frac{1}{\abs t} & \text{else},
\end{cases}
\]
and let
\[
\mcE(a,b) = 1.65 \sum_{j=1}^{r_1}  \left[\mcE_0(a+\varphi_j) + \mcE_0(b+\varphi_j)\right] + 5.57 \sum_{j=r_1+1}^{r_1+ r_2}  \left[\mcE_0(a+\varphi_j) + \mcE_0(b+\varphi_j)\right].
\]
Then, if the inequality
\begin{multline}\label{e:hecke}
-2\Ren\left(\sum_{N\mfp^m\leq 20N} \frac{\log N\mfp}{N\mfp^{m/2}}\chi(\mfp^m)f_{a,b,\pi}(m\log N\mfp) \right) \\
+ \frac1\pi \Imn\left[l_{G,1}\left(\frac12+ib,\chi\right) - l_{G,1}\left(\frac12+ia, \chi\right)\right] + \mcE(a,b) \\
- \sum_{j=1}^n \hat f_{a,b,\pi}(t_n) +2 \eps_0 \Ren(\hat f_{a,b,\pi}(i/2)) \leq 0.44
\end{multline}
holds, where $f_{a,b,\pi}$ and $\hat f_{a,b,\pi}$ are the functions defined in \eqref{e:fabh-def} and \eqref{e:Ffabh-def}, $\mcL$ contains all zeros with imaginary part in $[a,b]$.

Conversely, let
\[
Q'= \left(\frac{\abs{d_K}N\mff}{4^{r_2}\pi^N}\right)^{1/2} +e,\quad\quad A = \max_{j}\{\abs{\varphi_j}\} +  \max_j\{\abs{n_j}/2\},
\]
and let
\[
C(X) = \log\log(\abs{X}+A) + \log\log Q' + \log(N) + 3.
\]
We also assume that for $z\in \{a,b\}$ we have
\begin{equation}\label{e:hecke-converse-lower}
\abs{z} \geq 10\log\log(A+10) + 10\log\log(Q'+10) + 10\log(N) + 50
\end{equation}
and $\abs{z+\varphi_j}> 100 N$ whenever $z+\varphi_j\neq 0$. Then, under the assumption of the Riemann Hypothesis for $L(s,\chi)$, if $\mcL$ contains all zeros with imaginary part in $[a-C(a),b+C(b)]$, the inequality in \eqref{e:hecke} holds.
\end{theorem}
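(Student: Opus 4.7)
The plan is to specialize Theorem \ref{t:fundamental-inequality} to the Hecke L-function $L(s,\chi)$ with parameters $\sigma_0=1$, $\sigma_1=1$, $h=\pi$. Unpacking $G(s,\chi)$ produces $r_1$ real archimedean parameters $(\lambda_k,\mu_k)=(\tfrac12,\tfrac{n_j+i\varphi_j}{2})$ and $r_2$ complex ones $(\lambda_k,\mu_k)=(1,\tfrac{|n_j|}{2}+i\varphi_j)$, so $v_k$ equals $\varphi_j/2$ or $\varphi_j$ respectively. The pole set $\mathcal P(\Lambda)$ is empty unless $\chi$ is principal, in which case $L(s,\chi_0)$ has a simple pole at $s=1$ and (by the functional equation) $\Lambda$ has simple poles at $0$ and $1$; using the symmetry $\overline{\hat f_{a,b,\pi}(\xi)}=\hat f_{a,b,\pi}(\bar\xi)$, which follows from $\overline{f_{a,b,\pi}(t)}=f_{a,b,\pi}(-t)$, the two pole contributions sum to exactly $2\Ren\hat f_{a,b,\pi}(i/2)$, matching the $\eps_0$-term.

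For the forward direction I would bound each ingredient on the left of \eqref{e:fundamental-inequality} by the corresponding piece of \eqref{e:hecke}. Lemma \ref{l:w_f} applied with $C=N$ (since each rational prime has at most $N$ prime ideals above it), $M=20N$, $\sigma_0=\sigma_1=1$, $h=\pi$, controls the truncated prime sum with a tail of size $O(N\cdot(20N)^{(1-\pi)/2}/(\pi-1))$, well within the slack between the $0.49$ threshold of Theorem \ref{t:fundamental-inequality} and the $0.44$ threshold of \eqref{e:hecke}. Lemma \ref{l:w_8} splits $w_\infty(f_{a,b,\pi})$ into the main logarithmic-derivative term $\tfrac{1}{\pi}\Imn[l_{G,1}(\tfrac12+ib,\chi)-l_{G,1}(\tfrac12+ia,\chi)]$ and, for each archimedean place and each endpoint $z\in\{a,b\}$, an oscillatory integral of frequency $R=\lambda_k z+v_k$. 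These are bounded by Lemma \ref{l:w8_rem} with $h=\pi$: for a real place ($\lambda_k=1/2$) a choice $B_k\approx 1.5<2$ gives the Phragm\'en--Lindel\"of constant $C_k\approx 2.59$ and hence $2C_k/\pi\le 1.65$; for a complex place ($\lambda_k=1$) one chooses $B_k\approx 0.8<1$ to obtain $C_k\approx 17.5$ and $C_k/\pi\le 5.57$. The assumption $|z+\varphi_j|>20N$ makes the exponentially small $e^{-|R|B_k}/A$ term negligible, while $\mcE_0(0)=0$ correctly handles the trivial case $R=0$ in which the integrand vanishes identically.

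For the converse direction under RH I would follow the proof template of the second part of Theorem \ref{t:fundamental-inequality}. Applying the Weil--Barner formula to the pair $g_{r,X}$ with $r$ slightly greater than $1/2$ and $X=b+C(b)$ gives an identity of the form \eqref{e:aux-sum}, whose right-hand side is $O(\log(Q'(|X|+A)))$ by Stirling applied to $G(s,\chi)$; the quantities $Q'$ and $A$ in the statement are designed precisely to absorb the conductor factor and the shifts by $\varphi_j$ and $n_j/2$. Combined with \eqref{e:Ffabh-upper-outer} at $h=\pi$, the tail
\[
\sum_{\gamma>b+C(b)}n_\rho\,\hat f_{a,b,\pi}(\gamma)\ll e^{-C(b)}\log\bigl(Q'(|X|+A)\bigr)
\]
is made arbitrarily small by the explicit $C(T)=\log\log(|T|+A)+\log\log Q'+\log N+3$; the tail below $a-C(a)$ is symmetric. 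The lower bound \eqref{e:hecke-converse-lower} and the separation $|z+\varphi_j|>100N$ then guarantee that the main Archimedean piece, the prime-power tail, and $\mcE(a,b)$ together stay comfortably inside the $0.44$ budget.

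The main technical obstacle is nailing down the constants $1.65$ and $5.57$: one must minimize
\[
C_k(B_k)=\frac{1}{B_k(1-\cos B_k)}\Bigl(1+\frac{1}{\cos(\lambda_k\pi B_k/2)}\Bigr)
\]
over the admissible intervals $(0,2)$ for real places and $(0,1)$ for complex places, while simultaneously verifying that the exponentially small $e^{-|R|B_k}/A$ correction is uniformly negligible (here $A\ge 1/4$ and $|R|\ge 10N$ by hypothesis). A secondary bookkeeping difficulty, in the converse, is to show that the explicit $C(T)$ with its $\log\log$-refinement dominates the $\log(Q'(|X|+A))$ growth of the right-hand side of \eqref{e:aux-sum} with enough margin to fit inside the remaining slack after the forward terms are accounted for; this is essentially an exercise in choosing the additive constant $3$ large enough and bounding the poles contribution to \eqref{e:aux-sum} uniformly.
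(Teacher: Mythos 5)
Your proposal follows the paper's proof almost exactly: the same specialization $\sigma_0=\sigma_1=1$, $h=\pi$, $C=N$ in Lemma \ref{l:w_f}, the same Phragm\'en--Lindel\"of parameters in Lemma \ref{l:w8_rem} (the paper takes $B_k=1.6$ for real places and $B_k=0.8$ for complex, matching your $C_k$ values and the $1.65$, $5.57$ coefficients), and the same converse strategy via \eqref{e:aux-sum}, $C(X)$, and Stirling for the digamma function. The only cosmetic difference is that the paper takes $r=1$ (so $\sigma_0/2+r=3/2$, placing the logarithmic derivative safely inside absolute convergence and giving the clean bound $e^{-u}\le(1+u^2)^{-1}$) rather than $r$ near $1/2$, which would degrade the $L'/L$ bound; you would want to fix that choice when filling in the constants.
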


This result can be simplified a little for larger values of $a$ and $b$. For example, if we assumed $\abs{z+\varphi_j}> 2000 N$, the term $\mcE(a,b)$ in \eqref{e:hecke} could be omitted.

\begin{proof}
Since $p$ splits in at most $N$ prime ideals in $\mcO_K$, the coefficients $c(p^m)$ satisfy the bound
\begin{equation}\label{e:hecke-cp-bound}
\abs{c(p^m)} = \abs{\sum_{N\mfp^k = p^m} \frac{\chi(\mfp^k)}{k}} \leq N.
\end{equation}
Therefore, by Lemma \ref{l:w_f}, the first line of \eqref{e:hecke} differs at most by
\[
\frac{8N}{\pi}\frac{(20 N)^{\frac{1-\pi}2}}{\pi - 1} \leq 0.05
\]
from $w_f(f_{a,b,\pi})$.

Next, we investigate $w_\infty(f_{a,b,\pi})$. For the real places we take $B_k=1.6$ in Lemma \ref{l:w8_rem}. Then we have $C_k < 2.58$ and for $R\in\{\frac{a+\varphi_k}2,\frac{b+\varphi_k}2\}$ the left hand side of \eqref{e:osc-int} is
bounded by
\[
2.58 \left(\frac{1}{\abs{R}} + \frac{e^{-1.6 \abs{R}}}{\frac14 + \frac{n_k}{2}}\right) \leq \frac{2.59}{\abs{R}},
\]
where we used $\abs{R}\geq 10$. For the complex places we choose $B_k = 0.8$. Then we have $C_k < 17.46$ and the left hand side of \eqref{e:osc-int} is bounded by $17.47/\abs{R}$. Since $2\cdot 2.58/\pi \leq 1.65$ and $17.47/\pi\leq 5.56$, it follows from Lemma \ref{l:w_8} that $w_\infty(f_{a,b,\pi})$ is bounded by the expression on the second line of \eqref{e:hecke}. Thus, the statement concerning the completeness of the list $\mcL$ follows from Theorem \ref{t:fundamental-inequality}.

For the converse statement, we assume that all non-trivial zeros are given in the form $\rho=\frac12 + i\gamma$. We then have to show that
\begin{equation}\label{e:hecke-converse}
\sum_{\gamma\notin[a-C(a),b+C(b)]} n_\rho \hat f_{a,b,\pi}(\gamma)  + 0.05 + 2\mcE(a,b) \leq 0.44
\end{equation}
holds. Under the conditions imposed on $a$ and $b$ we have $2 \mcE(a,b)\leq 0.06$, and it is therefore sufficient to show that the sum over zeros is $<0.33$.

We treat the upper part of the sum, where $\gamma > X= b+C(b)$, first, following the idea in the proof of Theorem \ref{t:fundamental-inequality}. Using the bound in \eqref{e:Ffabh-upper-outer}, we see that
\begin{equation*}
\hat f_{a,b,\pi}(\gamma) \leq \frac 2\pi e^{-C(b) + X-\gamma} \leq \frac 2\pi \frac{e^{-C(b)}}{1+(X-\gamma)^2}
\end{equation*}
holds for such $\gamma$, and consequently we get
\begin{equation}\label{e:hecke-zsum-upper}
\sum_{\gamma > X} n_\rho \hat f_{a,b,\pi}(\gamma) \leq \frac 2\pi e^{-C(b)} \left(\Ren\frac{\Lambda'}{\Lambda}\left(\frac32 + X,\chi\right) + \eps_0\right)
\end{equation}
from \eqref{e:aux-sum}. In order to estimate the contribution of the finite primes to the logarithmic derivative of $\Lambda(s,\chi)$ we use the bound (4.26) in \cite{BFJK13}, which together with \eqref{e:hecke-cp-bound} gives
\begin{equation}\label{e:hecke-finite-primes}
\abs{\frac{L'}{L}(3/2)} \leq -N\frac{\zeta'}\zeta(3/2) \leq 1.51 N.
\end{equation}

For the contribution of the infinite primes to the logarithmic derivative of $\Lambda(s,\chi)$ we will also need a version of the Stirling formula for the digamma function.

\begin{lemma}
Let $z\in\C$ have positive real part. Then we have
\begin{equation}\label{e:stirling}
\frac{\Gamma'}{\Gamma}(z) = \log(z) - \frac{1}{2z} + \Theta\left(\frac{3}{2\abs{z}^2}\right).
\end{equation}
\end{lemma}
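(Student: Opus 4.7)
The plan is to derive the bound by integrating by parts twice in a Binet-type integral representation of the digamma function. My starting point is
\[
\frac{\Gamma'}{\Gamma}(z) = \log z - \frac{1}{2z} - \int_0^\infty \frac{P(t)}{(z+t)^2}\, dt,
\]
valid for $\Ren(z) > 0$, where $P(t) = \{t\} - \tfrac12$ is the first periodic Bernoulli function. This comes from differentiating Binet's first formula for $\log \Gamma(z)$, which itself follows from Euler--Maclaurin summation applied to $\sum_{n\geq 0}(n+z)^{-1}$; both derivations are short and standard.

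Next I integrate by parts using the antiderivative $Q(t) = \int_0^t P(s)\, ds = \tfrac12\{t\}(\{t\}-1)$. The function $Q$ is continuous, periodic of period $1$, vanishes at every integer, and satisfies $|Q(t)|\leq \tfrac18$ by elementary extremisation on $[0,1]$. Since $Q(0)=0$ kills the boundary term at zero and $Q$ is bounded while $(z+t)^{-2}$ vanishes at infinity, one obtains
\[
\int_0^\infty \frac{P(t)}{(z+t)^2}\, dt = 2\int_0^\infty \frac{Q(t)}{(z+t)^3}\, dt,
\]
and therefore
\[
\left|\frac{\Gamma'}{\Gamma}(z) - \log z + \frac{1}{2z}\right| \leq \frac{1}{4}\int_0^\infty \frac{dt}{|z+t|^3}.
\]

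The remaining integral is elementary: writing $z = x + iy$ with $x>0$ and substituting $u = x + t$,
\[
\int_0^\infty \frac{dt}{|z+t|^3} = \int_x^\infty \frac{du}{(u^2+y^2)^{3/2}} = \frac{1}{|z|(|z|+x)},
\]
where the antiderivative $u/(y^2\sqrt{u^2+y^2})$ is combined with the identity $|z|^2 - x^2 = y^2$ (with the $y=0$ case obtained by continuity). Since $x\geq 0$ gives $|z|+x \geq |z|$, the right-hand side is at most $\tfrac{1}{4|z|^2}$, comfortably inside the claimed tolerance $\tfrac{3}{2|z|^2}$.

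I expect no real obstacle. The only ingredients are the Binet representation (standard), the pointwise bound $|Q|\leq \tfrac18$ (a few lines of calculus), and the closed form for the real integral above (a textbook trigonometric substitution). It is worth noting that the constant $\tfrac32$ stated in the lemma is far from sharp; the scheme above actually produces $\tfrac14$ uniformly on the whole half-plane $\Ren(z)>0$, so there is plenty of slack to absorb any minor inefficiency in the bookkeeping.
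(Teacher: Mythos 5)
Your argument is correct but takes a genuinely different route from the paper. The paper's proof writes $\frac{\Gamma'}{\Gamma}(z)=\log z-\frac{1}{2z}+\mu'(z)$ and cites Remmert's uniform bound $\abs{\mu(re^{i\phi})}\leq\frac{1}{12r\cos(\phi/2)^2}$ for the Binet remainder on the slit plane; it then controls $\mu'(z)$ by Cauchy's integral formula on the circle $\abs{\xi-z}=\abs{z}/3$, where one checks $\abs{\xi}\geq\frac23\abs z$ and $\cos(\phi/2)^2\geq\frac14$, giving exactly the constant $\frac{3}{2}$. You instead start from the Euler--Maclaurin representation of the digamma function, integrate by parts once against the antiderivative $Q(t)=\frac12\{t\}(\{t\}-1)$ of the periodic Bernoulli function, use $\abs{Q}\leq\frac18$, and evaluate $\int_0^\infty\abs{z+t}^{-3}dt=\frac{1}{\abs z(\abs z+\Ren z)}\leq\abs{z}^{-2}$ in closed form. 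Your method is more elementary and self-contained (it does not lean on the referenced estimate for $\mu$) and it yields the sharper constant $\frac14$ in place of $\frac32$; the paper's version is shorter because it offloads the analytic content to a standard reference. One small point: with your convention $P(t)=\{t\}-\tfrac12$, the Euler--Maclaurin/Binet formula actually reads
\[
\frac{\Gamma'}{\Gamma}(z)=\log z-\frac{1}{2z}+\int_0^\infty\frac{P(t)}{(z+t)^2}\,dt,
\]
i.e.\ with a plus sign in front of the integral (equivalently, use $P(t)=\tfrac12-\{t\}$ with your sign). Since only the absolute value of that integral enters, this sign slip has no effect on the final estimate, but it should be fixed for the derivation to be literally correct.
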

\begin{proof}
By the Stirling formula for $l_{\Gamma,1}$ we have
\begin{equation*}
\frac{\Gamma'}{\Gamma}(z) = \log(z) - \frac{1}{2z} + \mu'(z),
\end{equation*}
where $\mu(z)$ is holomorphic in $\C\setminus(\infty,0]$ and satisfies $\abs{\mu(re^{i\phi})}\leq \frac{1}{12 r\cos(\phi/2)^2}$ \cite[2.4.2]{Remmert1990}. From this we get the claimed bound for $\mu'(z)$ using the Cauchy formula, integrating along the circle $\abs{\xi-z}=\frac {\abs{z}}3$, where we have $\abs{\xi}\geq \frac{2}{3}\abs{z}$ and $\cos(\phi/2)^2\geq \frac 1 4$.
\end{proof}

We treat the $\Gamma$-factors at the real places first. We claim the bound
\begin{equation}\label{e:gamma-real}
\frac12 \Ren \frac{\Gamma'}{\Gamma}\left(\frac34 + \frac{n_j}2 + i\frac{X+\varphi_j}{2}\right) \leq \log(\abs b + A)
\end{equation}
to hold for $j\leq r_1$. If $\abs{\frac34 + \frac{n_j}2 + i\frac{X+\varphi_j}{2}} \leq 10$, the Stirling formula \eqref{e:stirling} implies that the left hand side of \eqref{e:gamma-real} is bounded by $\frac 12(\log(10) + \frac 32 (\frac 43)^2) < 2.5$ and the right hand side is larger than $3.9$, so the bound holds. Otherwise, the $\Theta$-term in $\eqref{e:stirling}$ is $<0.02$ and by \eqref{e:hecke-converse-lower} we have $C(b)< \abs{b}/2$, so we get
\begin{align*}
\log\abs{\frac34 + \frac{n_j}2 + i\frac{X+\varphi_j}{2}} &\leq \log \frac{3/2 + n_j + C(b) + \abs{b} + A}{2} \\
&\leq \log\frac{\abs b + A}{2} + \frac{3/2+n_j+C(b)}{\abs{b}} \leq \log(\abs b + A)-0.15,
\end{align*}

An analogous calculation, considering first the case $\abs{\frac32 + \frac{\abs{n_j}}2 + i(X+\varphi_j)}<10$, shows that we have
\begin{equation}\label{e:gamma-complex}
 \Ren \frac{\Gamma'}{\Gamma}\left(\frac32 + \frac{\abs{n_j}}2 + i(X+\varphi_j)\right) \leq \log(\abs{b}+A) + 0.57,
\end{equation}
at the complex places.

So combining \eqref{e:hecke-finite-primes}, \eqref{e:gamma-real} and \eqref{e:gamma-complex}, we see that the right hand side of \eqref{e:hecke-zsum-upper} is bounded by
\begin{multline*}
\frac{2}{\pi}e^{-C(b)} \left(N(1.51 + \log(\abs{b}+A) + 0.57) + \log(Q') + 1\right) \\
\leq \frac{2}{\pi e^3}\left(\frac{1.51 + 0.57 + 1 + 1}{\log(50)} + 1\right) \leq 0.07.
\end{multline*}
Since the conditions on $a$ and $b$ are symmetric, the same bound holds for the lower part of the sum in \eqref{e:hecke-converse} and thus the assertion follows.
\end{proof}

%%%%%%%%%%%%%%%%%%%%%%%%%%%%%%%%%%%%%%%%%%%%%%%%%%%%%%%%%%%
\subsection{L-series of elliptic curves over $\Q$}
%%%%%%%%%%%%%%%%%%%%%%%%%%%%%%%%%%%%%%%%%%%%%%%%%%%%%%%%%%%%

Let $E$ be an elliptic curve defined over $\Q$ and let $N$ be its conductor. Then the L-series attached to $E$ is defined by the Euler product
\[
L(E,s) = \prod_{p\mid N} \bigl(1-\epsilon(p) p^{-s}\bigr)^{-1} \prod_{p\nmid N}\bigl(1-a_p p^{-s} + p^{1-2s}\bigr)^{-1}, 
\]
where $\epsilon(p)$ is $1$, $-1$ or $0$ according to whether $E$ has split multiplicative, non-split multiplicative or additive reduction at $p$ and, $a_p$ is defined by
\[
\#E(\mathbb F_p) = p + 1- a_p .
\]
By the modularity theorem, the complete L-function
\[
\Lambda(E,s) = \left(\frac{\sqrt N}{2\pi}\right)^s\Gamma(s)L(E,s)
\]
satisfies the functional equation
\[
\Lambda(E,s) = \pm \Lambda(E,2-s)
\]
of a weight $2$ modular form. For $L(E,s)$ we get the following result.

\begin{theorem}
 Let $E$ be an elliptic curve over $\Q$, and let $N$ be its conductor. Let $\mathcal L=(1 + i\gamma_j)_{j=1}^m$ be a list of non-trivial zeros of $L(E,s)$ and let $a,b\in\R$ such that we have $b-a>5$ and such that for $z\in\{a,b\}$ we have either $\abs{z}>15$ or $z=0$. 
 We define
 \[
 \mathcal E(t) =
 \begin{cases}
0	&	t=0 \\
\frac{5.57}{\abs{t}}	&\text{else}. 
 \end{cases}
 \]
Then, if the inequality
\begin{multline}\label{e:elliptic}
 -\frac{1}{\pi}\sum_{p^m< 30} \frac{c(p^m)}{p^m} \frac{\sin(bm\log p)-\sin(am\log p)}{m\cosh(\frac\pi 2m\log p)}\\
 + \frac{b-a}{2\pi} \log \frac N{4\pi^2}
+ \frac{1}{\pi}\Imn\Bigl[l_{\Gamma,1}(1+ ib) - l_{\Gamma,1}(1+ia)\Bigr] 
\\
+ \mathcal E(a) + \mathcal E(b) 
 - \sum_{j=1}^m \hat f_{a,b,\pi}(\gamma_j) \leq 0.42
\end{multline}
holds, where $\hat f_{a,b,\pi}$ is the function defined in \eqref{e:Ffabh-def}, $\mathcal L$ contains all non-trivial zeros of $L(E,s)$ with imaginary part in $[a,b]$.

Conversely, if the Riemann Hypothesis for $L(E,s)$ holds, if $a$ and $b$ additionally satisfy the condition
\begin{equation}\label{e:elliptic-converse-lower}
 \min\{\abs a,\abs b\} \geq \max\{70, 3\log\log N\},
\end{equation}
and if $\mathcal L$ contains all zeros with imaginary part in \[[a-\log\log(Na^2)-3, b + \log\log(Nb^2)+ 3],\] then \eqref{e:elliptic} holds.
\end{theorem}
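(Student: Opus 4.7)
The plan is to specialize Theorem \ref{t:fundamental-inequality} to $L(E,s)$, which satisfies \textsc{(L\ref{L:Dirichlet-series})}--\textsc{(L\ref{L:functional-equation})} with $\sigma_0 = 2$, $\sigma_1 = 3/2$, $k_1 = 1$, $\lambda_1 = 1$, $\mu_1 = 0$, $Q = \sqrt{N}/(2\pi)$, and $\mathcal P(\Lambda) = \emptyset$. The Hasse bound $|a_p| \le 2\sqrt{p}$ gives Satake parameters of modulus $\sqrt{p}$, whence $|c(p^m)| \le 2 p^{m/2}/m$ and \textsc{(L\ref{L:Euler-product})} holds with $C = 2$. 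Taking $h = \pi$ satisfies $h > 2\sigma_0 - \sigma_1 = 5/2$, so Theorem \ref{t:fundamental-inequality} applies, and the pole sum in \eqref{e:fundamental-inequality} is empty.

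First I would apply Lemma \ref{l:w_f} with cut-off $M = 29$ to bound $w_f(f_{a,b,\pi})$ by the sum on the first line of \eqref{e:elliptic} up to a tail error $\tfrac{16}{\pi} \cdot 29^{(1-\pi)/2}/(\pi-1) < 0.07$. Next, Lemma \ref{l:w_8} rewrites $w_\infty(f_{a,b,\pi})$ as $\tfrac{1}{\pi}\Imn[l_{G,2}(1+ib) - l_{G,2}(1+ia)]$ plus an oscillatory remainder; since $G(s) = (\sqrt{N}/(2\pi))^s \Gamma(s)$, the main term splits into $\tfrac{b-a}{2\pi}\log(N/(4\pi^2))$ and the digamma difference on the second line of \eqref{e:elliptic}. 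The oscillatory remainder is controlled via Lemma \ref{l:w8_rem}: because $\pi/(\lambda_1 h) = 1$ I would take $B_1 = 0.8$, compute $C_1 \le 17.46$ so that $C_1/\pi \le 5.57$, and obtain a bound $\tfrac{5.57}{|R|}$ for each $R \in \{a, b\}$ (matching $\mcE(a) + \mcE(b)$), the exponential correction being negligible under $|a|, |b| > 15$; when $a$ or $b$ equals $0$ the corresponding sine term drops out identically, accounting for $\mcE_0(0) = 0$. Putting these together, the left-hand side of \eqref{e:elliptic} plus an error $<0.07$ bounds $w_f + w_\infty - \sum \hat f(\gamma_j)$ from above by $0.49$, and the first assertion follows from Theorem \ref{t:fundamental-inequality}.

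For the converse, under RH every non-trivial zero is $\rho = 1 + i\gamma$ with $\gamma \in \R$. Mirroring the second half of the proof of Theorem \ref{t:fundamental-inequality}, I would bound the contribution of zeros with $\gamma > X := b + C(b)$ by using \eqref{e:Ffabh-upper-outer} (and $\pi/h = 1$) to obtain $\hat f_{a,b,\pi}(\gamma) \ll e^{-C(b)}/((\gamma - X)^2 + r^2)$ for a fixed $r > 1/2$, then invoking \eqref{e:aux-sum} with empty pole set. This reduces the tail to $\ll e^{-C(b)} \cdot \Ren(\Lambda'/\Lambda)(1 + r + iX)$; here $|L'/L(1+r+iX)| \le 2(-\zeta'/\zeta)(1/2 + r) = O(1)$ by the absolutely convergent Dirichlet series, and Stirling gives $|G'/G(1 + r + iX)| \le \tfrac{1}{2}\log N + \log|b| + O(1) \le \tfrac{1}{2}\log(Nb^2) + O(1)$. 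Since $C(b) = \log\log(Nb^2) + 3$, the factor $e^{-C(b)} \log(Nb^2) \le e^{-3}$ makes this contribution at most an absolute constant times $e^{-3}$, and the range $\gamma < a - C(a)$ is handled symmetrically.

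The main delicate point will be fitting all error contributions --- the Lemma \ref{l:w_f} tail, $\mcE(a) + \mcE(b)$, the Stirling $O(1)$ constants, and the two zero-sum tails --- into the $0.42$ budget of \eqref{e:elliptic}. The hypothesis \eqref{e:elliptic-converse-lower} is calibrated for exactly this: $\min\{|a|,|b|\} \ge 70$ forces $\mcE(a), \mcE(b) < 0.08$, while $\min\{|a|,|b|\} \ge 3\log\log N$ ensures that $\log(Nb^2)$ is dominated by $|b|$, so that the Stirling $O(1)$ does not wash out the $e^{-3}$ decay produced by the choice of $C(\cdot)$.
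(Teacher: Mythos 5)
Your proposal reproduces the paper's argument step for step: the same specialization of Theorem \ref{t:fundamental-inequality} with $\sigma_0=2$, $\sigma_1=3/2$, $C=2$, $h=\pi$, the same cut-off and numerical bound ($<0.07$) from Lemma \ref{l:w_f}, the same $B_1=0.8$, $C_1<17.46$, $C_1/\pi<5.57$ choice in Lemma \ref{l:w8_rem} to obtain $\mcE(a)+\mcE(b)$, and the same reduction of the converse via \eqref{e:aux-sum} and Stirling with $C(X)=\log\log(NX^2)+3$. The approach and decomposition match the paper's proof essentially exactly.
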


\begin{proof}
We first bound the coefficients $c(p^m)$. For $p\mid N$ we have $\abs{c(p^m)} = \abs{\eps(p^m)}\leq 1$. Otherwise, there exists a quadratic integer $\alpha_p$ by Hasse's theorem, satisfying $a_p = 2\Ren(\alpha_p)$ and $\alpha_p \overline{\alpha_p}=p$. Consequently, the Euler factor at $p$ factors into 
\[
(1-\alpha_p p^{-s})^{-1} (1-\overline{\alpha_p} p^{-s})^{-1},
\]
and so we have $\abs{c(p^m)} \leq 2p^{m/2}.$ Therefore, (L3) holds with $\sigma_1= 3/2$ and $C=2$, so by Lemma \ref{l:w_f} the sum on the first line of \eqref{e:elliptic} differs at most by
\[
\frac{16}{\pi} \frac{30^{\frac{1-\pi}{2}}}{\pi-1} < 0.07
\]
from $w_f(f_{a,b,\pi})$.

With the choice $B_k=0.8$, as for the complex places in the proof of Theorem \ref{t:hecke-l-series}, we see from Lemma \ref{l:w_8} and Lemma \ref{l:w8_rem} that the expression on the second line of \eqref{e:elliptic} differs at most by $\mcE(a)+\mcE(b)$ from $w_\infty(f_{a,b,\pi}$). Thus, the first assertion concerning the completeness of the list $\mcL$ follows from Theorem \ref{t:fundamental-inequality}.

For the proof of the second part let $C(X) = \log\log(NX^2) + 3$. As in the proof of Theorem \ref{t:hecke-l-series}, it is again sufficient to show that the inequality
\begin{equation}\label{e:elliptic-converse-to-show}
\sum_{\gamma \notin [a-C(a),b+C(b)]} n_\rho\hat  f_{a,b,\pi}(\gamma) + 0.07 + 2(\mathcal E(a) + \mathcal E(b)) \leq 0.42
\end{equation}
holds. As in the proof of Theorem \ref{t:hecke-l-series} we have
\begin{equation}\label{e:zsum-elliptic-upper}
\sum_{\gamma>X} n_\rho\hat  f_{a,b,\pi}(\gamma) \leq \frac2\pi e^{-C(b)} \Ren \frac{\Lambda'}{\Lambda}(E,2 + iX).
\end{equation}
For the finite part we use the bound $\abs{\frac{L'}{L}(E,2)} \leq 2\frac{\zeta'}{\zeta}(3/2)\leq 3.02$. For the gamma factor we use the Stirling formula \eqref{e:stirling} again. From \eqref{e:elliptic-converse-lower} we see that $C(b)\leq \abs{b}/2$, so we have $\abs{X}\geq 35$ and hence obtain the bound
\[
\Ren\frac{\Gamma'}{\Gamma}(2+iX) \leq \log(\abs X) + 0.06.
\]
Therefore, the right hand side of \eqref{e:zsum-elliptic-upper} is bounded by
\[
\frac2\pi \frac{ e^{-3}}{\log (N b^2)}\left(\half \log (N b^2) + \frac{C(b)}{\abs{b}} + 0.06 + 3.03 - \log(2\pi) \right) < 0.03,
\]
where we also used $\log(N b^2)\geq 8$. Again, the same bound holds for the lower part of the sum, and since we have
\[
0.03+0.03+0.07+ 2(\mathcal E(a)+\mathcal E(b))\leq 0.36,
\]
the assertion follows from \eqref{e:elliptic-converse-to-show}.
\end{proof}

\section{Acknowledgements}
I wish to thank the referee for many helpful comments, which led to a substantial improvement of this paper. I also wish to thank Patricia Klotz for pointing out the mistaken signs in \cite{BFJK13} that were corrected in this paper.

\bibliographystyle{amsalpha}
\bibliography{zero-counting}

\end{document}